\newcommand{\vertiii}[1]{{\left\vert\kern-0.25ex\left\vert\kern-0.25ex\left\vert #1
    \right\vert\kern-0.25ex\right\vert\kern-0.25ex\right\vert}}
\theoremstyle{plain}
\newtheorem{theorem}{Theorem}[section]
\newtheorem{lemma}[theorem]{Lemma}
\newtheorem{corollary}[theorem]{Corollary}
\theoremstyle{remark}
\numberwithin{equation}{section}
\newcommand{\NN}{{\mathbb{N}}}
\newcommand{\ZZ}{{\mathbb{Z}}}
\newcommand{\RR}{{\mathbb{R}}}
\renewcommand{\SS}{{\mathbb{S}}}
\DeclareMathOperator{\vol}{vol}
\DeclareMathOperator{\supp}{supp}
\newcommand{\GL}{\mathrm{GL}}
\newcommand{\SL}{\mathrm{SL}}
\newcommand{\SO}{{\mathrm{SO}}}
\newcommand{\IST}{\mathrm{I}}
\providecommand{\M}{M}  
\providecommand{\lin}{l} 
\providecommand{\vi}{i_0,\ldots,i_{r}}
\providecommand{\rchi}{\raisebox{2pt}{$\chi$}}
\providecommand{\Error}[2]{\mathcal E^{#1}_{#2}}
\def\moverlay{\mathpalette\mov@rlay}
\def\mov@rlay#1#2{\leavevmode\vtop{%
   \baselineskip\z@skip \lineskiplimit-\maxdimen
   \ialign{\hfil$\m@th#1##$\hfil\cr#2\crcr}}}
\newcommand{\charfusion}[3][\mathord]{
    #1{\ifx#1\mathop\vphantom{#2}\fi
        \mathpalette\mov@rlay{#2\cr#3}
      }
    \ifx#1\mathop\expandafter\displaylimits\fi}
\definecolor{cmd}{rgb}{1.0, 0.35, 0.21}
\begin{document}
\title{A generic effective Oppenheim theorem for systems of forms}
\author{Prasuna Bandi}
\author{Anish Ghosh}
\author{Jiyoung Han}
\address{PB and AG: School of Mathematics, Tata Institute of Fundamental Research, Mumbai, 400005, India}
\address{JH: Research Institute of Mathematics, Seoul National University, Seoul, 08826, South Korea}
\email{prasuna@math.tifr.res.in, ghosh@math.tifr.res.in, jiyoung.han.math@snu.ac.kr}
\thanks{AG was supported by a Government of India, Department of Science and Technology, Swarnajayanti fellowship DST/SJF/MSA-01/2016--17, a grant from the Infosys foundation, a CEFIPRA grant and a Matrics grant. PB and AG acknowledge support of the Department of Atomic Energy, Government of India, under project $12-R\&D-TFR-5.01-0500$. JY is supported by the Samsung Science and Technology Foundation under project No. SSTF-BA1601-03 and the National Research Foundation of Korea(NRF) grant funded by the Korea government under project No. 0409-20200150.}

\keywords{effective Oppenheim conjecture, systems of quadratic and linear forms, geometry of numbers, Rogers' second moment theorem}

\maketitle
\begin{abstract}
We prove a uniform effective density theorem as well as an effective counting result for a generic system comprising \textcolor{black}{a polynomial with a mild homogeneous condition} and several linear forms using \textcolor{black}{Rogers'} second moment formula for the Siegel transform on the space of unimodular lattices.
\end{abstract}

\section{Introduction}
In this paper, we investigate the effective density of values of a system of forms at integer points. We recall Margulis's famous result \cite{Ma89}, resolving an old conjecture of Oppenheim: if $Q$ is an indefinite nondegenerate quadratic form in at least $3$ variables which is not proportional to a form with integer coefficients, then $\{Q(x) : x \in \ZZ^n\}$ is dense in $\RR$. Recently, there has been a surge of interest in \emph{effective} versions of Margulis's result. The basic question in this area is: given $\xi \in \RR$ and $\epsilon > 0$, how large must $x \in \ZZ^{n}$ be so that
$$|Q(x) - \xi| < \epsilon?$$
Margulis's proof of his theorem is based on dynamics on the space of unimodular lattices, and does not easily lend to effectivising. Indeed, effective results constitute one of the main current challenges in homogeneous dynamics. In \cite{LM14}, Lindenstrauss and Margulis investigated this problem for ternary quadratic forms and found a logarithmic in $\epsilon$ bound for $x$ as above, for a large class of quadratic forms satisfying an explicit Diophantine condition. In \cite{GGN18}, Ghosh, Gorodnik and Nevo  showed that one can do much better for a generic form. Namely, \textcolor{black}{for any given $\epsilon>0$,} it was shown that for almost every quadratic form in $3$ variables, and for every $\eta > 1$, 
\begin{equation}\label{eq:intro}
|Q(x) - \xi| < \epsilon \text{ and } \|x\| \leq \frac{1}{\epsilon^{\eta}}
\end{equation}
admits a solution. \textcolor{black}{Here, $\|\cdot\|$ is the maximum norm.} It can be shown that the exponent $1$ in (\ref{eq:intro}) is sharp. The method of proof in \cite{GGN18} involves effective mean ergodic theorems and duality techniques and applies to a wide variety of Diophantine problems. However as far as the classical Oppenheim problem is concerned, while this technique applies to quadratic forms in any number of variables, it gives the best possible result only in dimension $3$; the quality of the exponent deteriorates as the dimension increases. In \cite{AM18}, Athreya and Margulis used a different approach; they used Rogers second moment formula in the space of lattices to obtain the right exponent in (\ref{eq:intro}) in all dimensions for the special case $\xi = 0$.   

Another natural problem in this setting is the quantitative form of the Oppenheim conjecture. Namely, given a quadratic form $Q$ and an interval $I$, one seeks to study the counting function
\begin{equation}\label{eq:intro2}
\mathcal{N}_{Q,I}(t)=\#\{x\in\mathbb{Z}^n :  Q(x)\in I,\ \|x\|\leq t\}.
\end{equation}

Explicit asymptotics for the counting function have been obtained in \cite{DM93} (lower bounds) and \cite{EMM98, EMM05} (upper bounds) for every nondegenerate, indefinite, irrational quadratic form. Namely, it is known that\footnote{\textcolor{black}{The} upper bounds are more complicated for forms of signature $(2, 1)$ and $(2, 2)$; we refer the reader to \cite{EMM05}.} 
$$\mathcal{N}_{Q,I}(t) \approx_Q |I| t^{n-2}. $$

In \cite{AM18}, this quantitative result was sharpened to obtain an error term for generic forms. Namely, it was shown that there exists $\nu > 0$ such that for every interval $I$ and for almost every quadratic form Q,
$$\mathcal{N}_{Q,I}(t) = c_Q |I| t^{n-2} + O_{Q, I}(t^{n-2-\nu}). $$
\textcolor{black}{Here, $c_Q>0$ is a constant depending only on the quadratic form $Q$ (For details, see the remark below Lemma 3.8 in \cite{EMM98}).} 

The results in \cite{AM18} were generalised by Kelmer and Yu \cite{KY18} in three different regimes: they allowed the intervals in (\ref{eq:intro2}) to shrink; they considered more general homogeneous polynomials and confirmed a prediction of Ghosh, Gorodnik and Nevo \cite{GGN18}; and they considered \emph{uniform} versions of these results, in other words, they considered the situation where a single random quadratic form approximates all the points $\xi$. Such uniform results were first considered by Bourgain \cite{Bou16} for diagonal ternary forms and then by Ghosh and Kelmer in \cite{GK18} for general ternary forms. The method of proof in \cite{KY18} also relies on Rogers' second moment formula. We refer the reader to \cite{BGHM10, GK17, GKY19, GKY20, KS19} for other recent works on effective versions of the Oppenheim conjecture in various contexts.

\subsection{Systems of forms}
Much less is known when one considers the natural generalisation of Oppenheim type problems to systems of forms. This problem was first considered by Dani and Margulis in \cite{DM90} who gave sufficient conditions for the density of a pair $(Q, L)$ \textcolor{black}{of a quadratic form and a linear form} in $3$ variables. This was generalised by Gorodnik \cite{Gor04} to pairs $(Q, L)$ in four or more variables. Further work on systems of forms has been done by Gorodnik \cite{Gor04b} for systems of quadratic forms, by Dani \cite{Dan00, Dan08} for systems comprising a quadratic form and a linear form, and by M\"{u}ller \cite{Mu05, Mu08} for certain systems of quadratic forms. Other than the papers of M\"{u}ller which use the circle method and therefore get quantitative results, the other works use homogeneous dynamics and establish qualitative statements. In particular, the general problem of establishing quantitative and effective versions of Oppenheim type conjectures for systems of forms seems wide open. In this paper, we will prove counting results with error term, as well as effective theorems for a generic system comprising a quadratic form (more generally, a homogeneous polynomial) and a system of linear forms.  Following \cite{AM18, KY18} we will use Rogers' formula; in fact we follow the strategy of Kelmer and Yu \cite{KY18} closely. The main new ingredient in this present paper is a volume calculation, Theorem \ref{volume form}. We note that the problems considered in this paper do not seem to be amenable to the ergodic approach of \cite{GGN18}, which requires semisimple stabilizers. However, a similar problem, that of the effective density of linear maps taking values on rational quadratic surfaces can be addressed using ergodic methods, see Theorem 1.5 in \cite{GGN18}. Previously density and counting results in this setting were proved by Sargent \cite{Sar14, Sar14b}. 

\subsection{Main results}
In order to state our main results, we need to recall a classification of the systems of forms from \cite{Sar14}.
\subsubsection{Classification}\label{classification_old}

Consider the space of systems $(Q, \M)$ of a nondegenerate quadratic form $Q$ on $\RR^n$ and a linear map $\M : \RR^n \rightarrow \RR^r$ of rank $r$, for some given $r < n$. 

We define an equivalence relation on the space of systems $(Q,M)$ as follows:  $(Q_1,\M_1)\sim (Q_2,\M_2)$ 
if there exist $(g_1, g_2)\in \SL_n(\RR)\times \GL_r(\RR)$ and $0\neq\lambda\in \RR$ 
such that \textcolor{black}{$(Q_2, \M_2)=(\lambda Q^{g_1}_1, g_1\M_1 g_2)$}. Here, $Q^{g}(v) :=  Q(vg)$. \\
Define
\[
	\mathcal{Y}^{(p,q,u,v)}:=\left\{(Q,M): \begin{split}&\text{Q is a nondegenerate quadratic form on } \RR^{n}\text{ with sign}(Q)=(u,v),\\
	&M:\RR^{n}\rightarrow \RR^{r}\text{ is a linear map of rank r, and sign}(Q|_{\ker(M)})=(p,q). 
	\end{split}\right\}
\]

According to \cite{Sar14}, which in turn is adapted from \cite{Gor04}, $(Q,M)\in \mathcal{Y}^{(p,q,u,v)}$ is equivalent to $(Q_0, \M_0)$, where

\begin{equation*}
\left\{\begin{array}{l}
Q_0(x_1, \ldots, x_{p+q}, y_1, \ldots, y_{2t}, z_1, \ldots, z_s)\\
\hspace{0.2in}=(x_1^2+\cdots+x_p^2-x_{p+1}^2-\cdots-x_{p+q}^2)+(2y_1y_{t+1}+\cdots+2y_ty_{2t})\\
\hspace{0.28in}+Q'(z_1, \ldots, z_s);\\[0.05in]
\M_0(x_1, \ldots, x_{p+q}, y_1, \ldots, y_{2t}, z_1, \ldots, z_s)
=(y_{t+1}, \ldots, y_{2t}, z_1, \ldots, z_s),
\end{array}\right.
\end{equation*}
and $Q'(z_1, \ldots, z_s)=z_1^2+\cdots+z_{p'}^2-z_{p'+1}^2-\cdots-z_{p'+q'}^2$.
Here $t=n-r-(p+q),\; p'=u-t-p,\; q'=v-t-q \text{ and } s=p'+q'$.

 Hence one can identify $\mathcal Y^{(p,q,u,v)}$ with $(\RR-\{0\})\times \SL_n(\RR) \times \GL_r(\RR)/\SO(Q_0,\M_0)$, where $\SO(Q_0,\M_0)$ is the isotropy subgroup of $(Q_0,\M_0)$ in $(\RR-\{0\})\times\SL_n(\RR) \times \GL_r(\RR)$. Let us assign the measure on $\mathcal Y^{(p,q,u,v)}$ induced from the Haar measure on $(\RR-\{0\})\times \SL_n(\RR) \times \GL_r(\RR)$, which is the product of the Lebesgue measure on $\RR$ and Haar measures on $\SL_n(\RR)$ and $\GL_r(\RR)$, respectively.

\subsubsection{The homogeneous space $\mathcal Y^{(F_0, \M_0)}$}\label{sec:classification}
More generally, let $(F_0, M_0)$ be a system of \textcolor{black}{a polynomial in $n$ variables} and a linear map $M_0 : \RR^n \rightarrow \RR^r$ of the form:
\begin{equation}\label{eqn:def}
\left\{\begin{array}{l}
F_0(x_1, \ldots, x_{p+q}, y_1, \ldots, y_{2t}, z_1, \ldots, z_s)\\
\hspace{0.2in}=(x_1^d+\cdots+x_p^d-x_{p+1}^d-\cdots-x_{p+q}^d)+P_1(y_1, \ldots, y_{2t})\\
\hspace{0.28in}+P_2(z_1, \ldots, z_s);\\[0.05in]
\M_0(x_1, \ldots, x_{p+q}, y_1, \ldots, y_{2t}, z_1, \ldots, z_s)
=(y_{t+1}, \ldots, y_{2t}, z_1, \ldots, z_s),
\end{array}\right.
\end{equation}
where \textcolor{black}{$d\ge 2$ is even,} $P_1(y_1, \ldots, y_{2t})$ is a polynomial such that there is a positive integer $d'<d$ for which $P_1(Ty_1, \ldots, Ty_t, y_{t+1},\ldots, y_{2t})=O(T^{d'})$ as $T$ goes to infinity, and $P_2(z_1, \ldots, z_s)$ is any polynomial in the variables $z_1, \ldots, z_s$. 

Define $\mathcal Y^{(F_0,\M_0)}$ by
\[
\mathcal Y^{(F_0, \M_0)}:=
\left\{(F, M)= (\lambda F_0^{g_1}, g_1\M_0g_2) :
(\lambda,g_1, g_2) \in (\RR-\{0\})\times\SL_n(\RR)\times\in \GL_r(\RR) \right\}
\]
so that as in Section \ref{classification_old}, 
we can identify $\mathcal Y^{(F_0, \M_0)}$ with the symmetric space $(\RR-\{0\})\times \SL_n(\RR)\times \GL_r(\RR)/\IST_{(F_0,\M_0)}$, 
where $\IST_{(F_0,\M_0)}$ is the isotropy subgroup. 
\textcolor{black}{Using} this identification, we will assign the $(\RR-\{0\})\times \SL_n(\RR)\times \GL_r(\RR)$-invariant measure on $\mathcal Y^{(F_0, \M_0)}$.

\textcolor{black}{We will not specify the norm $\|\cdot\|$ on $\RR^n$ at present. If we need to fix a norm, we will specify it in the relevant statement.} 

\vspace{0.2in}
Our first theorem proves effective counting for generic forms with values in possibly shrinking sets. It is an analogue, for systems of forms, of Theorem 1 in \cite{KY18}.

\begin{theorem}\label{thm:main1_new}
		Let $p \ge 1$, $q \ge 1$ with $d+1\le p+q\le n-r$ and \textcolor{black}{let} $0\le \kappa<n-r-d$. Let $\{I_{t}\}_{t>0}$ be a \textcolor{black}{non-increasing} family of bounded measurable subsets of $\RR^{r+1}$ with $|I_{t}|=\textcolor{black}{ct^{-\kappa}}$ for some $c>0$. Then there is $\nu >0$ such that for almost every $(F,M)\in \mathcal{F}^{(F_0,M_0)}$, there exists $c_{F,M}>0$ such that 
		\[
		\#\{v\in \ZZ^{n} : (F,M)(v)\in I_{t}, \|v\|\le t\}=c_{F,M}|I_{t}|t^{n-r-d}+O_{F,M}(t^{n-r-d-\kappa-\nu}).
		\]
	Here, $|I|$ is the Lebesgue measure of a subset $I \subset \RR^{r+1}$.
	\end{theorem}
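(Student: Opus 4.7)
The plan is to adapt the approach of Kelmer and Yu \cite{KY18} to our setting, the key new input being the volume formula of Theorem \ref{volume form}. The first step is to express the counting function as a Siegel transform. Writing $(F, M) = (\lambda F_0^{g_1}, g_1 M_0 g_2)$ and substituting $w = v g_1$ in the defining sum, we obtain
$$N_{F,M}(t) = \widehat{\chi_{E_t}}(\ZZ^n g_1),$$
where $\widehat{\chi_E}(\Lambda) := \#((\Lambda \setminus \{0\}) \cap E)$ is the Siegel transform and
$$E_t := \{w \in \RR^n : (\lambda F_0(w), M_0(w) g_2) \in I_t,\ w g_1^{-1} \in t B\},$$
with $B$ the unit ball for the ambient norm $\|\cdot\|$. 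Thus the parameters $(\lambda, g_2, g_1)$ control both the test set $E_t$ and the unimodular lattice at which the Siegel transform is evaluated.

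Averaging over $g_1$ modulo $\SL_n(\ZZ)$ (which, together with $\lambda$ and $g_2$, supplies all the randomness in $(F, M)$), Siegel's mean value formula gives $\EE[\widehat{\chi_{E_t}}] = \vol(E_t)$. Theorem \ref{volume form} evaluates this volume to leading order as $c_{F,M} |I_t| t^{n-r-d}$, where $c_{F,M}$ is an integral of a smooth density on the joint level set of $(F_0, M_0)$, finite and positive under the sign hypothesis $p, q \ge 1$ and the growth condition on $P_1$ that guarantees the fibers are well behaved.

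The heart of the argument is the variance bound. Applying Rogers' second moment formula on $\SL_n(\ZZ) \backslash \SL_n(\RR)$ to $\chi_{E_t}$ yields
$$\int |\widehat{\chi_{E_t}}|^2 \, d\Lambda = \vol(E_t)^2 + \vol(E_t) + R(E_t),$$
where $R(E_t)$ is a primitivity remainder. The set $E_t$ is essentially an $(n - r - 1)$-dimensional tube of transverse thickness $\asymp |I_t|$ and radius $\asymp t$ around a fiber of $(F_0, M_0)$; using this geometry and the regularity provided by Theorem \ref{volume form}, one estimates $R(E_t) \ll |I_t| t^{n-r-d}$, giving the variance bound $\mathrm{Var}(\widehat{\chi_{E_t}}) \ll |I_t| t^{n-r-d}$.

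Combining these via Chebyshev's inequality along a geometric subsequence $t_k = (1 + \delta)^k$ and applying Borel--Cantelli yields, for almost every $(F, M)$, the asymptotic with error $O(t_k^{(n - r - d - \kappa)/2 + \epsilon})$ along the subsequence. The monotonicity of the family $\{I_t\}_{t > 0}$ and of the ball of radius $t$ lets us sandwich an arbitrary $t$ between consecutive $t_k$'s and transfer the estimate to all $t$; after optimizing $\delta$ and $\epsilon$, this produces the claimed exponent $n - r - d - \kappa - \nu$ for some explicit $\nu > 0$. The main obstacle is the second moment step: ensuring that $R(E_t)$ enjoys the claimed power saving \emph{uniformly} in $(\lambda, g_1, g_2)$, which demands careful control of the Siegel transform in the cusp of $\SL_n(\ZZ) \backslash \SL_n(\RR)$ together with the smoothness of the fibration of $(F_0, M_0)$ produced by Theorem \ref{volume form}.
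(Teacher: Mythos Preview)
Your overall strategy---Rogers' second moment, Chebyshev plus Borel--Cantelli along a subsequence, and the volume formula of Theorem~\ref{volume form}---is the same as the paper's. But there is a genuine gap at the second-moment step that you have misidentified.

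You write the counting function as $\widehat{\chi_{E_t}}(\ZZ^n g_1)$ and then average over $g_1$. The problem is that your set $E_t$ itself depends on $g_1$, through the condition $w g_1^{-1} \in tB$. Siegel's and Rogers' formulae require a \emph{fixed} test function integrated against the varying lattice; here the function moves with the lattice, so neither the identity $\EE[\widehat{\chi_{E_t}}] = \vol(E_t)$ nor the variance bound $\mathrm{Var}(\widehat{\chi_{E_t}}) \ll \vol(E_t)$ is available as stated. This is not a cusp issue and has nothing to do with the primitivity remainder $R(E_t)$: for $n \ge 3$ Rogers' formula already gives $\mathrm{Var}(\widehat{\chi_A}) \ll_n \vol(A)$ for any fixed bounded $A$, so your worry about controlling $R(E_t)$ ``uniformly in $(\lambda,g_1,g_2)$'' is misplaced. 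The real obstruction is decoupling the test set from $g_1$.

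The paper handles this (following \cite{KY18}) by a covering argument: restrict to a compact $\mathcal K \subset \SL_n(\RR)$, choose a polynomial subsequence $t_k = k^{\alpha}$, and cover $\mathcal K$ by $O(k^{d_n})$ balls $\mathcal U_{1/k} h$. For each center $h$ one produces \emph{fixed} sets $\underline A_{k,h} \subset A_{g,I_t,t} \subset \overline A_{k,h}$, valid for all $g \in \mathcal U_{1/k} h$ and $t \in [t_k,t_{k+1})$, to which Rogers' bound (Theorem~2.2 of \cite{KY18}) legitimately applies. Summing over the $O(k^{d_n})$ centers and over $k$ gives the Borel--Cantelli input. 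Note also that your geometric subsequence $t_k=(1+\delta)^k$ with fixed $\delta$ would leave a sandwiching error of size $\asymp \delta \cdot \vol(E_t)$, i.e.\ no power saving; the polynomial spacing $t_{k+1}/t_k = 1 + O(1/k)$ is what makes $\vol(\overline A_{k,h} \setminus \underline A_{k,h}) \ll k^{-1}\vol(\overline A_{k,h})$ and hence allows the choice of $\delta \in (\delta_0,1)$ in Lemma~\ref{Discrepancy}.
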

	
In particular, we obtain the following corollary which constitutes an effective version of Oppenheim's conjecture for systems of forms.

\begin{corollary}\label{cor:main1_new}
	Let $p \ge 1$, $q \ge 1$ with $d+1\le p+q\le n-r$ and $0\le \kappa<n-r-d$. 
	Let $(\kappa_{0},\kappa_{1},\ldots,\kappa_{r})\in \RR_{>0}^{r+1}$ be such that $\kappa_{0}+\kappa_{1}+\cdots+\kappa_{r}=\kappa$. 
	Then for any $\xi=(\xi_0,\xi_1,\ldots,\xi_{r})\in \RR^{r+1}$ and for almost every $(F,M)\in \mathcal{F}^{(F_0,M_0)}$, the system of inequalities
\[\left\{\begin{array}{l}
|F(v)-\xi_0|<t^{-\kappa_0};\\
|\lin_i(v)-\xi_{i}|<t^{-\kappa_{i}},\;1 \le i \le r;\\
\|v\|\le t, \end{array}
\right.\]
where $M=(\lin_1,\ldots,\lin_r)$, has integer solutions for sufficiently large t.
\end{corollary}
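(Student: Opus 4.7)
The plan is to deduce this corollary as an immediate consequence of Theorem~\ref{thm:main1_new} by choosing the shrinking targets $I_t$ to be boxes centered at $\xi$. Concretely, I would set
\[
I_t \;:=\; \prod_{i=0}^{r} \bigl(\xi_i - t^{-\kappa_i},\,\xi_i + t^{-\kappa_i}\bigr) \;\subset\; \RR^{r+1},
\]
so that an integer vector $v \in \ZZ^n$ with $\|v\| \le t$ satisfies the displayed system of inequalities in the corollary if and only if $(F,M)(v) \in I_t$ with $\|v\|\le t$. Since each $\kappa_i > 0$, the sets $I_t$ shrink as $t$ grows, so $\{I_t\}_{t>0}$ is a non-increasing family of bounded measurable sets, and its Lebesgue measure is
\[
|I_t| \;=\; 2^{r+1}\,t^{-(\kappa_0 + \kappa_1 + \cdots + \kappa_r)} \;=\; 2^{r+1}\,t^{-\kappa},
\]
which fits the hypothesis of Theorem~\ref{thm:main1_new} with the constant $c = 2^{r+1}$ and with the same exponent $\kappa \in [0, n-r-d)$.

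Applying Theorem~\ref{thm:main1_new} to this family, I would obtain an exponent $\nu > 0$ such that for almost every $(F, M) \in \mathcal{F}^{(F_0, M_0)}$ there is a strictly positive constant $c_{F,M}$ with
\[
\#\bigl\{v\in \ZZ^n : (F,M)(v)\in I_t,\ \|v\|\le t\bigr\} \;=\; 2^{r+1}\,c_{F,M}\,t^{\,n-r-d-\kappa} \;+\; O_{F,M}\bigl(t^{\,n-r-d-\kappa-\nu}\bigr).
\]
The hypothesis $\kappa < n-r-d$ ensures that the main term has strictly positive exponent, so it tends to $+\infty$ as $t \to \infty$. Since the error term is of strictly smaller polynomial order by $t^{\nu}$, the total count is at least $1$ for all $t$ larger than some threshold $t_0 = t_0(F, M)$. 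Any integer vector $v$ counted above gives the desired solution to the system in the corollary.

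There is essentially no obstacle in this deduction: the entire content of the corollary is contained in Theorem~\ref{thm:main1_new}. The only points worth highlighting are that the positivity of $c_{F,M}$ (supplied by the theorem) is what prevents the main term from vanishing on a positive-measure set of $(F,M)$, and that one does not need any explicit description of either $c_{F,M}$ or the threshold $t_0(F,M)$ — only the positivity of $c_{F,M}$ and the strict positivity of $\nu$ are used.
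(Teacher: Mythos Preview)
Your proposal is correct and matches the paper's approach exactly: the paper treats Corollary~\ref{cor:main1_new} as an immediate consequence of Theorem~\ref{thm:main1_new} (and likewise its slice version, Corollary~\ref{cor:main1}, as an immediate consequence of Theorem~\ref{thm:main1}), without writing out the details. Your choice of shrinking boxes $I_t$ centered at $\xi$, the verification that $|I_t|=2^{r+1}t^{-\kappa}$, and the observation that the positive main term dominates the error for large $t$ are precisely the omitted steps.
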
	

Our second main theorem is a `uniform' effective counting result.  
\begin{theorem}\label{thm:main2_new}
Let $p \ge 1$, $q \ge 1$ with $d+1\le p+q\le n-r$. 
Let $0\leq \eta < \min\{1,\frac{n-r-d}{(r+1)(1+r(r+2))}\}$ and $0\le\kappa< \frac{n-r-d-(r+1)\eta-r(r+1)(r+2)\eta}{(r+1)(r+2)}$. 
For $0\le j\le r$, let $\kappa_j >0$ be such that $\sum_{j=0}^{r}\kappa_j=\kappa$. 
Let $N(t)$ be a non-decreasing function such that $N(t)=O(t^{\eta})$. 
Then there exists $\nu>0 $ such that for almost every $(F,M)\in \mathcal{F}^{(F_0,M_0)}$ and for \textcolor{black}{all $I\subset [-N(t),N(t)]^{r+1}$} of the form $I=I_0\times I_1\times \cdots \times I_{r}$ with $|I_j|\ge t^{-\kappa_j}$, 
\[
\#\{v\in \ZZ^{n} : (F,\M)(v)\in I, \|v\|\le t\}=c_{F,M} |I |t^{n-r-d}+O_{F,M}(|I|t^{n-r-d-\nu}).
\]
\end{theorem}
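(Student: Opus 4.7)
The plan is to follow the strategy of Kelmer--Yu \cite{KY18} closely, combining Rogers' second moment formula on $\SL_n(\RR)/\SL_n(\ZZ)$ with a net argument that upgrades a pointwise variance estimate for a single box into a uniform estimate over all admissible product boxes $I \subset [-N(t), N(t)]^{r+1}$ with $|I_j| \ge t^{-\kappa_j}$. Parametrising $(F, M) = (\lambda F_0^{g_1}, g_1 M_0 g_2)$ and setting
\[
E_{t, I} = \{ v \in \RR^n : (F, M)(v) \in I, \; \|v\| \le t \},
\]
the counting function equals the Siegel transform of $\chi_{E_{t, I}}$ evaluated at the relevant unimodular lattice. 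Siegel's mean value formula identifies its first moment with $\vol(E_{t, I})$, which by Theorem \ref{volume form} is asymptotically $c_{F, M} |I| t^{n-r-d}$ under the given hypotheses on $(p, q, d)$.

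The next step is a Rogers-type second moment estimate controlling
\[
\int_{\mathcal{F}^{(F_0,M_0)}} \bigl| \widehat{\chi_{E_{t, I}}} - \vol(E_{t, I}) \bigr|^2 \, d\mu
\]
by the sum of $\vol(E_{t, I})$ and an off-diagonal contribution coming from pairs $(v, w)$ both lying in $E_{t, I}$. Applying Theorem \ref{volume form} to this joint volume yields a variance bound of order $|I| t^{n-r-d} + |I|^2 t^{2(n-r-d) - \alpha}$ for some $\alpha > 0$ depending on $n, r, d$.

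For the uniformity, I fix $t_k = 2^k$ and build a net $\mathcal N_k$ of candidate product boxes inside $[-N(t_k), N(t_k)]^{r+1}$ by snapping the endpoints of each $I_j$ to a grid of spacing $t_k^{-C}$ for a fixed large $C$, and using dyadic sizes for the $|I_j|$. Since $N(t_k) = O(t_k^\eta)$, one has $|\mathcal N_k| \ll t_k^{A}$ for an exponent $A = A(r, \eta, \kappa, C)$. Applying Chebyshev's inequality to each $I \in \mathcal N_k$ and taking a union bound, the constraints $\eta < \frac{n-r-d}{(r+1)(1+r(r+2))}$ and the upper bound on $\kappa$ are precisely what makes the resulting series over $k$ summable, so that Borel--Cantelli produces a full-measure set of $(F, M)$ on which
\[
\bigl| \#(\ZZ^n \cap E_{t_k, I}) - c_{F, M} |I| t_k^{n-r-d} \bigr| \;\ll\; |I| t_k^{n - r - d - \nu}
\]
holds for every large $k$ and every $I \in \mathcal N_k$. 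A sandwiching argument then passes from $(t_k, \mathcal N_k)$ to arbitrary $t \in [t_k, t_{k+1}]$ and arbitrary admissible $I$, with the approximation error absorbed into $O_{F,M}(|I| t^{n-r-d-\nu})$ provided $C$ is large enough.

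The main obstacle is calibrating the second moment bound so that it beats the union bound over the polynomially large net: the exponent $A$ grows essentially linearly in $\eta$ and $\kappa$, while the variance bound improves only linearly in $n - r - d$, and the two scales must be reconciled. This is the source of the somewhat intricate numerology in the hypotheses. Underneath this lies the off-diagonal estimate in Rogers' formula, which reduces to the joint volume computation of Theorem \ref{volume form}; it is here that the transversality hypotheses $p, q \ge 1$ and $p + q \ge d + 1$ are crucially used, through the behaviour of neighbourhoods of the level sets of $(F, M)$ in the range where two integer vectors can simultaneously satisfy the constraints.
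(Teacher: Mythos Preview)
Your overall architecture (Rogers' second moment plus a Borel--Cantelli argument over a net of boxes, following \cite{KY18}) is the same as the paper's, but there are two points where your plan diverges from or misstates the actual argument.

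First, the second-moment input is simpler than you describe. Rogers' formula (as packaged in Theorem~2.2 of \cite{KY18}) gives, for a \emph{fixed} measurable $A\subset\RR^n$ and $n\ge 3$, the bound $\mu(\{g:\,D(\ZZ^n g,A)\ge T\})\ll \vol(A)/T^2$; there is no separate ``off-diagonal joint volume'' to estimate, and Theorem~\ref{volume form} is used only to compute $\vol(A)$ itself, not any pair correlation. More importantly, your set $E_{t,I}$ depends on $(F,M)$ through $g_1$, so the integral you wrote is not a Siegel transform of a fixed function. The paper handles this exactly as in \cite{KY18}: one restricts $g_1$ to a compact $\mathcal K$, covers $\mathcal K$ by $O(k^{d_n})$ translates $\mathcal U_{\epsilon_k}h$, and for each $h$ sandwiches $A_{g,I,t}$ between two \emph{fixed} sets $\underline{A}_{k,h}\subset\overline{A}_{k,h}$ before invoking Rogers. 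This spatial net contributes the factor $k^{d_n}$ to the union bound and is essential; your sketch omits it.

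Second, the paper does not run the net argument over all admissible boxes at once. It proceeds in two stages. Theorem~\ref{uniform} first establishes the estimate for boxes of \emph{exact} side lengths $|I_j|=t^{-\kappa_j}$, under the milder constraint $\kappa<\tfrac{n-r-d-(r+1)\eta}{r+2}$; the net here is only over the centres of such boxes (a $t_{k+1}^{-\beta}$-dense set of $[-N(t_{k+1}),N(t_{k+1})]$), giving $M(k)^{r+1}\ll t_{k+1}^{(r+1)(\eta+\kappa)}$ boxes. The passage to general $|I_j|\ge t^{-\kappa_j}$ is then a separate, purely deterministic step (Theorem~\ref{thm:main2}): one chooses $\kappa'_j=a>\kappa+r\eta$, tiles any admissible $I$ by $\prod_j n_j$ boxes of exact side $t^{-\kappa'_j}$, and sums; it is this tiling step, not the Borel--Cantelli step, that forces the stronger hypothesis $\kappa<\tfrac{n-r-d-(r+1)\eta-r(r+1)(r+2)\eta}{(r+1)(r+2)}$. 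Your single-pass net with dyadic box sizes could in principle be made to work, but the numerology you would obtain is not obviously the one stated, and the paper's decoupling of ``random'' (Theorem~\ref{uniform}) from ``deterministic tiling'' (Theorem~\ref{thm:main2}) is what makes the constants transparent.
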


This \textcolor{black}{theorem} implies Corollary \ref{cor1.4}, which is precisely the uniform version of effective Oppenheim studied in \cite{Bou16} and \cite{GK17}.
\begin{corollary}\label{cor1.4}
	Let \textcolor{black}{$d < n-r$} and $0\leq \eta < \min\{1,\frac{n-r-d}{(r+1)(1+r(r+2))}\}$. Let $N(t)$ be a non-decreasing function such that $N(t)=O(t^{\eta})$ and $\delta(t)$ be a non-increasing function satisfying $\frac{t^{\eta(r+1)(1+r(r+2))-a}}{\delta(t)^{(r+1)^{2}(r+2)}}\rightarrow 0$ for some $a<n-r-d$. Then for almost every $(F,M)\in\mathcal{Y}^{(F_0,M_0)}$ and for sufficiently large $t$,
	\[
	\sup_{\|\xi\|\le N(t)} \min_{v\in \ZZ^{n},\|v\|\le t}\|(F,M)(v)-\xi\|<\delta(t),
	\]
	where $\|\cdot\|$ denotes the supremum norm on $\RR^{n}$.
\end{corollary}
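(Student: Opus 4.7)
The plan is to derive Corollary~\ref{cor1.4} from Theorem~\ref{thm:main2_new} by applying the counting asymptotic to a small box of side $\delta(t)$ centered at each target point $\xi$. For $\xi$ with $\|\xi\|\le N(t)$, set
\[
I_\xi := \prod_{j=0}^{r}\bigl[\xi_j - \tfrac{\delta(t)}{2},\,\xi_j + \tfrac{\delta(t)}{2}\bigr].
\]
A lattice point $v\in\ZZ^n$ with $\|v\|\le t$ and $(F,M)(v)\in I_\xi$ automatically satisfies $\|(F,M)(v)-\xi\|<\delta(t)$. So it suffices to show that the count in Theorem~\ref{thm:main2_new} is strictly positive for $I_\xi$, \emph{uniformly} over all $\xi\in[-N(t),N(t)]^{r+1}$ and for $t$ large enough; this is exactly the kind of uniform statement Theorem~\ref{thm:main2_new} is built to provide.

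The key algebraic step is the choice of admissible exponents $\kappa_j$. Write $E := \eta(r+1)(1+r(r+2))$ and $\beta := (r+1)^2(r+2)$. The hypothesis on $\delta(t)$ is equivalent to $\delta(t)^\beta \gg t^{E-a}$, hence $\delta(t) \gg t^{(E-a)/\beta}$, for some $a<n-r-d$. I take equal exponents $\kappa_j = \kappa/(r+1)$ with
\[
\max\{0,\,(a-E)/\beta\}\;<\;\kappa_j\;<\;(n-r-d-E)/\beta;
\]
this range is non-empty precisely because $a<n-r-d$ and, by the assumption on $\eta$, $E<n-r-d$. Such a choice guarantees $\delta(t)\ge t^{-\kappa_j}$ for large $t$, while $\kappa=(r+1)\kappa_j$ stays below the admissible threshold $(n-r-d-E)/((r+1)(r+2))$ of Theorem~\ref{thm:main2_new}.

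With these $\kappa_j$ fixed and $N$ replaced by the non-decreasing function $N(t)+\delta(1)/2 = O(t^\eta)$ (which still contains every $I_\xi$, using that $\delta$ is non-increasing), Theorem~\ref{thm:main2_new} yields, for almost every $(F,M)\in\mathcal{Y}^{(F_0,M_0)}$, some $\nu>0$ such that
\[
\#\{v\in\ZZ^n : (F,M)(v)\in I_\xi,\ \|v\|\le t\} = c_{F,M}\,\delta(t)^{r+1}\,t^{n-r-d}\bigl(1+O(t^{-\nu})\bigr)
\]
uniformly in $\xi$. Since $c_{F,M}>0$ and the relative error is $O(t^{-\nu})$ with no $\xi$-dependence, the main term dominates for all $\xi\in[-N(t),N(t)]^{r+1}$ once $t$ is large, producing the required $v$.

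I do not anticipate a serious obstacle beyond this calibration between the decay rate of $\delta(t)$ and the admissible range for $\kappa$: all the analytic content already sits inside Theorem~\ref{thm:main2_new}, and the passage to the sup-min formulation is a direct consequence of its built-in uniformity in $I$.
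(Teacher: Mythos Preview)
Your proposal is correct and follows exactly the route the paper intends: the paper simply asserts that Theorem~\ref{thm:main2_new} implies Corollary~\ref{cor1.4} (and restates the slice version as Corollary~\ref{cor4.3}) without writing out the details, and your argument---applying the uniform count to the cube $I_\xi$ of side $\delta(t)$ with equal exponents $\kappa_j$ chosen in $\bigl(\max\{0,(a-E)/\beta\},(n-r-d-E)/\beta\bigr)$---is precisely the calibration that makes this implication work. The only cosmetic point is that the hypotheses $p\ge1$, $q\ge1$, $d+1\le p+q\le n-r$ of Theorem~\ref{thm:main2_new} are implicitly carried over to the corollary, which you may wish to note.
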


Note that by replacing $(F_{0},M_{0})$ with $(Q_{0},M_{0})$ and taking $d=2$, one can deduce that the above stated theorems hold for almost every $(Q,M)\in \mathcal{Y}^{(p,q,u,v)}$.

\section{Volume Estimation}
Following \cite{KY18}, we estimate the volume of the region given as the preimage of a system of \textcolor{black}{a polynomial $F_0$} and a linear map $M_0$ defined as in \eqref{eqn:def}. 
For simplicity, let us denote $Y_1=(y_1, \ldots, y_t)$, $Y_2=(y_{t+1}, \ldots, y_{2t})$ and $Z=(z_1, \ldots, z_s)$.

Recall that for two functions $f(T)$ and $f'(T)$, we denote that $f(T)\ll f'(T)$ if there is some constant $c>0$ such that $f(T)<cf'(T)$ for all (sufficiently large) $T>0$. 
We will use the notation $\ll_a$ when we want to specify that a constant $c$ depends on a variable $a$.

\begin{lemma}\label{smooth version} 
\textcolor{black}{Let $(F_0, M_0)$ be as in \eqref{eqn:def}.} Let $p\ge 1,\; q\ge 1 \text{ with }d+1\le p+q\le n-r \text{ and let }$ $h : \RR^n \rightarrow \RR$ be a compactly supported smooth function such that $\supp h \subseteq B_a(0) \subset \RR^n$ for some $a>1$, \textcolor{black}{where $B_a(0)$ is the ball of radius $a$ centered at the origin with respect to an arbitrary norm on $\RR^n$.} 
Let $I\subseteq [-1,1]^{r+1}\subset \RR^{r+1}$ be a measurable set.
For $\alpha\in (0,\frac {d-d'} d)$, there is $T_0>0$ such that for $T>T_0$,
\begin{equation}\label{eq:smooth:3}\begin{split}
\mathcal I_{h,I}
&:=\int_{\RR^n} h\Big(\frac v T\Big)\rchi_{I}(F_0(v),\M_0(v)) dv\\
&=J(h) |I| T^{n-r-d}\\
&\hspace{0.2in}+\left\{\begin{array}{lc}
\begin{array}{l}
O_a\left(\mathcal S_1(h)|I|T^{p+q+t-d-1}\right)\\
\hspace{0.5in}+O_a\left(\|h\|_\infty|I|T^{(1-\alpha)(p+q-d)+t}\right),
\end{array}
&\text{if }p+q \ge 2d+1;\\[0.18in]
\begin{array}{l}
O_a\left(\mathcal{S}_{1}(h) |I| T^{d+t-1}\right)
+O_a\left(\|h\|_\infty |I| T^{(1-\alpha)d+t}\right)
\end{array}
&\text{if }p+q=2d;\\[0.18in]
\begin{array}{l}
O_a\left(\mathcal S_1(h) |I| T^{t+d'-1}\log T\right)+
O_a\left(\mathcal S_1(h) |I| T^{d+t-2}\right)\\
\hspace{0.5in}+O_a\left(\|h\|_\infty |I| T^{(1-\alpha)(d-1)+t}\right),\\ 
\end{array}
&\text{if }p+q=2d-1;\\[0.18in]
\begin{array}{l}
O_a\left(\mathcal S_1(h)|I| T^{(1-\alpha)(p+q-2d)+t+d'-\alpha}\right)
\\
+O_a\left(\mathcal S_1(h) |I| T^{p+q+t-d-1}\right)+O_a\left(\|h\|_\infty |I| T^{(1-\alpha)(p+q-d)+t}\right),\\ 
\end{array}
&\\[0.15in]
&\hspace{-0.6in}\textcolor{black}{\text{if }
d+1 \le p+q < 2d-1,}
\\
\end{array}\right.
\end{split}\end{equation}
where
\[
J(h)=\frac 1 d \int_{\RR^t}\int_0^\infty \int_{S^{p-1}\times S^{q-1}}
h((\omega_1+\omega_2)r+Y'_1)r^{p+q-d-1} d\omega_1 d\omega_2 dr dY_1'
\]
and $\textcolor{black}{\mathcal S_1(h)}=\max(\|h\|_{\infty}, \|\partial h/\partial v_i\|_{\infty}, i=1, \ldots, n)$.
Here, $\omega_1$ and $\omega_2$ are spherical coordinates of unit spheres $S^{p-1}\subseteq \RR^p$ and $S^{q-1}\subseteq \RR^q$ with respect to the $\mathcal L^d$-norm, respectively.
\end{lemma}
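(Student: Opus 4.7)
The strategy is to reduce $\mathcal I_{h,I}$ to a one-dimensional integral over the value of $F_0$ and then expand in powers of $T^{-1}$, closely following the approach of Kelmer--Yu in \cite{KY18}, now adapted to a polynomial $F_0$ whose $Y$-perturbation $P_1$ has lower weighted degree $d' < d$ and a bounded $Z$-perturbation $P_2$.

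\textbf{Rescaling and polar coordinates.} Decompose $v = (X, Y_1, Y_2, Z) \in \RR^{p+q} \times \RR^t \times \RR^t \times \RR^s$ and substitute $X = T X'$, $Y_1 = T Y'_1$, leaving $Y_2, Z$ unscaled. This yields a Jacobian factor $T^{p+q+t} = T^{n-r}$ and
\[
F_0(v) = T^{d}\tilde F(X') + P_1(TY'_1, Y_2) + P_2(Z), \qquad P_1 = O(T^{d'}),
\]
where $\tilde F(X') := (X'_1)^d + \cdots + (X'_p)^d - (X'_{p+1})^d - \cdots - (X'_{p+q})^d$. Next, introduce polar coordinates with respect to the $\mathcal L^d$-norm on $X' = (X'_1, X'_2)$ by writing $X'_1 = r_1 \omega_1$, $X'_2 = r_2 \omega_2$ with $\omega_k$ on the $\mathcal L^d$-unit sphere, so that $dX' = r_1^{p-1} r_2^{q-1} dr_1\, dr_2\, d\omega_1\, d\omega_2$ and $\tilde F(X') = r_1^d - r_2^d$. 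Finally substitute $u = r_1^d - r_2^d$ in place of $r_1$, producing
\[
dX' = \tfrac{1}{d}\, r_1^{p-d}\, r_2^{q-1}\, du\, dr_2\, d\omega_1\, d\omega_2.
\]

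\textbf{Main term.} For each fixed $(Y_2, Z)$, the condition $(T^d u + P_1 + P_2,\, Y_2,\, Z) \in I$ restricts $u$ to a set of measure $|I(Y_2, Z)|/T^d$ lying within an $O(T^{d'-d})$-neighborhood of $0$; here $I(Y_2, Z)$ denotes the fiber of $I$ above $(Y_2, Z)$. The leading contribution is obtained by replacing $u$ by $0$ (so $r_1 = r_2 = r$) and $(Y_2/T, Z/T)$ by $0$ in the integrand. Using $\int |I(Y_2, Z)|\, dY_2\, dZ = |I|$, this produces $J(h)\,|I|\,T^{n-r-d}$.

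\textbf{Error analysis and casework.} The error comes from three sources: (i) the Taylor expansion in $u$ of $r_1^{p-d}$ and of $h$, bounded for $h \in C^1$ by a factor $\mathcal S_1(h)\cdot T^{d'-d}$ times an $r$-integral whose integrand is of the form $r^{p+q-2d-1}$; (ii) the replacement of $(Y_2/T, Z/T)$ by $0$ in $h$, controlled by $\mathcal S_1(h)/T$; and (iii) the small-$r$ region where the polar coordinates and the Taylor expansion degenerate, cut off at $r = T^{-\alpha}$ and bounded by $\|h\|_\infty$ times a direct volume estimate of the admissible set. The four cases of the lemma correspond to the integrability of $r^{p+q-2d-1}$: for $p+q \ge 2d+1$ the Taylor integral is finite at both ends; at $p+q = 2d$ the balance shifts; at $p+q = 2d-1$ a $\log T$ factor appears; and for $d+1 \le p+q < 2d-1$ the $r$-integral diverges near $0$, forcing the cutoff exponent $\alpha$ to be optimized and producing the exponent $(1-\alpha)(p+q-2d) + t + d' - \alpha$. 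Throughout, the restriction $\alpha \in (0, (d-d')/d)$ guarantees that the $u$-window around $0$ is smaller than the scale on which the Taylor error becomes non-negligible. The most delicate bookkeeping occurs in the regime $p+q < 2d-1$, where the divergent $r$-integral, the small-$r$ volume truncation, and the $P_1$-induced shift of the $u$-window must be balanced simultaneously; this is the main technical obstacle.
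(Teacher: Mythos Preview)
Your proposal is correct and follows essentially the same route as the paper: polar coordinates with respect to the $\mathcal L^d$-norm on $\RR^p\times\RR^q$, the change of variables to the level value $\zeta$ (your $u$) of $F_0$, the rescaling $r=r_2/T$, $Y_1'=Y_1/T$, a truncation of the small-$r$ region at $r=T^{-\alpha}$, and then a Taylor expansion of $(r^d+O(T^{d'-d}))^{1/d}$ and $(1+O(T^{d'-d}/r^d))^{(p-d)/d}$ on the remaining region, with the four cases arising exactly from the behavior of $\int r^{p+q-2d-1}\,dr$ near $0$. The only cosmetic difference is that the paper partitions $\{(r_1,r_2)\}$ into three regions before changing variables (treating $r_1\ge r_2$ and $r_2\ge r_1$ symmetrically), whereas you rescale first and then truncate at $r=T^{-\alpha}$; these are equivalent bookkeepings of the same computation.
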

\begin{proof} 
Let $(r_1, \omega_1)$ and $(r_2, \omega_2)$ be spherical coordinates of $\RR^p$ and $\RR^q$ with respect to the $\mathcal L^d$-norm, respectively so that
\[\begin{split}
&\mathcal I_{h,I}
=\int_{\RR^s}\int_{\RR^t}\int_{\RR^t}\int_0^\infty \int_0^\infty \int_{S^{p-1}\times S^{q-1}}
h\Big(\frac {r_1\omega_1+r_2\omega_2+Y_1+Y_2+Z}{T}\Big)\times\\
&\rchi_{I}(r_1^d-r_2^d+P_1(Y_1,Y_2)+P_2(Z),
Y_2,Z)\:
r_1^{p-1} r_2^{q-1}
d\omega_1 d\omega_2 dr_1 dr_2 dY_1 dY_2 dZ.
\end{split}\]

Let $T>1$ and let us divide the region $\{(r_1, r_2) \in \RR^2_{\ge0}\}$ into
\[
\{\: r_1 \ge r_2,\; 0\le r_2 \le T^{1-\alpha} \:\}\cup
\{\: r_2 \ge r_1,\; 0\le r_1 \le T^{1-\alpha}\:\}\cup
\{\: r_1, r_2 \ge T^{1-\alpha}\:\}.
\]

We will divide $\mathcal I_{h,I}$ into the summation of three integrals with respect to the partition above.

\vspace{0.1in}
\noindent (i) $r_1 \ge r_2$, $0 \le r_2 \le T^{1-\alpha}$.
Take
\begin{equation}\label{parametrization}
\left\{\begin{array}{ccl}
\textcolor{black}{\zeta}&=&r_1^d-r_2^d+P_1(Y_1,Y_2)+P_2(Z)\\
r&=&r_2/ T\\
Y'_1&=& Y_1/ T.
\end{array}\right.
\end{equation}

By change of variables,
\[\begin{split}
\Error{1}{h,I}&:=\int_{\RR^s}\int_{\RR^t}\int_{\RR^t}\int_{0}^{T^{1-\alpha}}\int_{r_{2}}^{\infty}\int_{S^{p-1}\times S^{q-1}}
h\Big(\frac {r_1\omega_1+r_2\omega_2+Y_1+Y_2+Z}{T}\Big)\times\\
&\rchi_{I}(r_1^d-r_2^d+P_1(Y_1,Y_2)+P_2(Z),
Y_2,Z)\:
r_1^{p-1} r_2^{q-1}
d\omega_1 d\omega_2 dr_1 dr_2 dY_1 dY_2 dZ\\
&=\frac {T^{p+q+t-d}} d\int_{\RR^s}\int_{\RR^t}\int_{\RR^t}\int_{-\infty}^\infty\int_{0}^{T^{-\alpha}}\int_{S^{p-1}\times S^{q-1}}
\\
&h\Big(\omega_1\Big(r^d+\frac {\zeta-P_1(TY'_1,Y_2)-P_2(Z)}{T^d}\Big)^{1/d}+\omega_2 r+Y'_1+\frac {Y_2+Z} T\Big)
\rchi_I (\zeta,Y_2,Z)\\
&\hspace{0.3in}\times \Big(r^d+\frac {\zeta-P_1(TY'_1,Y_2)-P_2(Z)}{T^d}\Big)^{(p-d)/d}r^{q-1}d\omega_1 d\omega_2 dr d\zeta dY'_1 dY_2 dZ.
\end{split}\]
\textcolor{black}{We remark that the domain of $(\zeta, r, Y'_1)$ in the second integral above is 
$$\left\{(\zeta, r, Y'_1) : \zeta + (Tr)^d- T(Y'_1 \cdot Y_2) - Q(Z) \ge T^{d(1-\alpha)}\right\}.$$ 
Since one can find $T_0>0$ such that this domain is contained in the intersection of supports of $h$ and $\rchi_{I}$ for $T>T_0$, we will refrain from referring to the above domain here and hereafter.}

\textcolor{black}{Recall that $(\zeta,Y_2,Z) \subseteq [-1,1]^{r+1}$ and $Y'_1\in B_a(0)$.} 
Also note that $\alpha<(d-d')/d$, $0 < r=\frac {r_2} T\le \frac 1 {T^{\alpha}}$ and  by the assumption on $P_1$, $0\le \frac {r_1^d-r_2^d} {T^{d'}}=\frac {\zeta-P_1(TY'_1,Y_2)-P_2(Z)} {T^{d'}}\ll a^{d'}$.
It follows that $r^d \le r^d+\frac {\zeta-P_1(TY'_1,Y_2)-P_2(Z)}{T^d} \ll \frac {a^{d'}} {T^{\alpha d}}$. Hence
\[
\Big(r^d+\frac {\zeta-P_1(TY'_1,Y_2)-P_2(Z)}{T^d}\Big)^{(p-d)/d} \ll
\left\{\begin{array}{cc}
\frac {a^{(p-d)\frac {d'} {d}}} {T^{\alpha(p-d)}}, &\text{if } p\ge d;\\[0.05in]
 r^{p-d}, &\text{if } p <d,
\end{array}\right.
\]
and we obtain that
\begin{equation}\begin{split}\label{eqn:smooth:1}
\Error{1}{h,I} &\ll
\|h\|_{\infty}|I|T^{p+q+t-d}\cdot a^{t}\times\left\{\begin{array}{cc}
\frac {a^{(p-d)\frac {d'} d}} {T^{\alpha(p-d)}}\int_0^{T^{-\alpha}} r^{q-1} dr, &\;\text{if }p \ge d;\\[0.05in]
\int_0^{T^{-\alpha}} r^{p+q-d-1} dr, &\;\text{if }p <d;
\end{array}\right.\\
&\ll_a \|h\|_{\infty}|I|T^{(1-\alpha)(p+q-d)+t}.
\end{split}\end{equation}

\noindent (ii) $r_1 \le r_2$, $0 \le r_1 \le T^{1-\alpha}$. \textcolor{black}{The second error is}
\[\begin{split}
\Error{2}{h,I}&:=\int_{\RR^s}\int_{\RR^t}\int_{\RR^t}\int_{0}^{T^{1-\alpha}}\int_{r_{1}}^{\infty}\int_{S^{p-1}\times S^{q-1}}
h\Big(\frac {r_1\omega_1+r_2\omega_2+Y_1+Y_2+Z}{T}\Big)\times\\
&\rchi_{I}(r_1^d-r_2^d+P_1(Y_1,Y_2)+P_2(Z),
Y_2,Z)\:
r_1^{p-1} r_2^{q-1}
d\omega_1 d\omega_2 dr_2 dr_1 dY_1 dY_2 dZ.\\
\end{split}\]

Similar to the case (i), by making the change of variables
\[
\left\{\begin{array}{ccl}
\zeta&=&r_1^d-r_2^d+P_1(Y_1,Y_2)+P_2(Z)\\
r&=&r_1/ T\\
Y'_1&=& Y_1/ T,
\end{array}\right.
\]
 it follows that
\begin{equation}\begin{split}\label{eqn:smooth:2}
\Error{2}{h,I} &\ll
\|h\|_{\infty}|I|T^{p+q+t-d}\cdot a^t\times\left\{\begin{array}{cc}
\frac {a^{(q-d)\frac {d'} d}} {T^{\alpha(q-d)}}\int_0^{T^{-\alpha}} r^{p-1} dr, &\;\text{if }q \ge d;\\[0.05in]
\int_0^{T^{-\alpha}} r^{p+q-d-1} dr, &\;\text{if }q <d;
\end{array}\right.\\
&\ll_a \|h\|_{\infty}|I|T^{(1-\alpha)(p+q-d)+t}.
\end{split}\end{equation}

Using the reparametrization in \eqref{parametrization} for the third range $\{r_1,r_2 \ge T^{1-\alpha}\}$, and by \eqref{eqn:smooth:1} and \eqref{eqn:smooth:2}, we have
\[\begin{split}
\mathcal I_{h,I}
&=\frac {T^{p+q+t-d}} d\int_{\RR^s}\int_{\RR^t}\int_{\RR^t}\int_{-\infty}^\infty \int_{1/T^{\alpha}}^\infty\int_{S^{p-1}\times S^{q-1}}\\
&h\Big(\omega_1\Big(r^d+\frac {\zeta-P_1(TY'_1,Y_2)-P_2(Z)}{T^d}\Big)^{1/d}+\omega_2 r+Y'_1+\frac {Y_2+Z} T\Big)
\rchi_I (\zeta,Y_2,Z)\\
&\hspace{0.6in}\times
\Big(r^d+\frac {\zeta-P_1(TY'_1,Y_2)-P_2(Z)}{T^d}\Big)^{(p-d)/d}r^{q-1}d\omega_1 d\omega_2 dr d\zeta dY'_1 dY_2 dZ\\
&+O_a\left(\|h\|_{\infty}|I|T^{(1-\alpha)(p+q-d)+t}\right).
\\
\end{split}\]

Now, note that since $r\ge \frac 1 {T^{\alpha}}$, we have $r^{d}T^{d-d'}\ge T^{(d-d')-d\alpha}$, where $(d-d')-d\alpha>0$ by the assumption on $\alpha$. 
Since $\frac {(\zeta-P_1(TY'_1,Y_2)-P_2(Z))} {T^{d'}}=O(a^{d'})$, 
there exists $T_{0}$ such that for $T>T_{0}$, $\left|\frac {\zeta-P_1(TY'_1,Y_2)-P_2(Z)} {T^{d'}}\right|<r^{d}T^{d-d'}$. Therefore for $T>T_{0}$,
\[
r\Big(1+ \frac{\zeta-P_1(TY'_1,Y_2)-P_2(Z)}{(rT)^d}\Big)^{1/d}
=r+O_a\Big(\frac 1 {r^{d-1}T^{d-d'}}\Big)
\]
and hence
\[\begin{split}
&h\Big(\omega_1\Big(r^d+\frac {\zeta-P_1(TY'_1,Y_2)-P_2(Z)}{T^d}\Big)^{1/d}+\omega_2 r+Y'_1+\frac {Y_2+Z} T\Big)\\
&\hspace{2in}=h((\omega_1+\omega_2)r+Y'_1)+O_a\Big(\frac{\mathcal S_1(h)}{r^{d-1}T^{d-d'}}\Big)+O\Big(\frac{\mathcal S_1(h)} {T}\Big).
\end{split}\]

Since $\supp h \subseteq B_a(0)$, and $\Big\|\omega_1\Big(r^d+\frac {\zeta-P_1(TY'_1,Y_2)-P_2(Z)}{T^d}\Big)^{1/d} +\omega_2 r+Y'_1+\frac {Y_2+Z} T\Big\|\ge r$, we obtain that
\[\begin{split}
\mathcal I_{h,I}
&=\frac {T^{p+q+t-d}} d\int_{\RR^s}\int_{\RR^t}\int_{\RR^t}\int_{-\infty}^\infty \int_{1/T^{\alpha}}^\infty\int_{S^{p-1}\times S^{q-1}}\\
&h\Big((\omega_1+\omega_2)r+Y'_1\Big)
\rchi_I (\zeta,Y_2,Z)\\
&\hspace{0.4in}\times
\Big(1+\frac {\zeta-P_1(TY'_1,Y_2)-P_2(Z)}{(rT)^d}\Big)^{(p-d)/d}r^{p+q-d-1}d\omega_1 d\omega_2 dr d\zeta dY'_1 dY_2 dZ\\
&\hspace{-0.3in}+O_a\Big(\mathcal S_1(h)|I| T^{p+q+t-2d+d'}\int_{1/T^{\alpha}}^a r^{p+q-2d} dr\Big)
+O_a\Big(\mathcal S_1(h)|I| T^{p+q+t-d-1}\int_{1/T^\alpha}^a r^{p+q-d-1}dr\Big) \\
&+O_a\left(\|h\|_{\infty}|I|T^{(1-\alpha)(p+q-d)+t}\right).
\end{split}\]

Similar to the above argument, since $\Big(1+ \frac{\zeta-P_1(TY'_1,Y_2)-P_2(Z)}{(rT)^d}\Big)^{(p-d)/d}=1+O\Big(\frac {a^{d'}} {r^dT^{d-d'}}\Big)$ for $T>T_{0}$ and $\supp h \subseteq B_a(0)$, we have
\[\begin{split}
\mathcal I_{h,I}
&=\frac {T^{p+q+t-d}} d\int_{\RR^s}\int_{\RR^t}\int_{\RR^t}\int_{-\infty}^\infty \int_{1/T^{\alpha}}^\infty\int_{S^{p-1}\times S^{q-1}}\\
&\hspace{1.1in}h\Big((\omega_1+\omega_2)r+Y'_1\Big)
\rchi_I (\zeta,Y_2,Z) r^{p+q-d-1}d\omega_1 d\omega_2 dr d\zeta dY'_1 dY_2 dZ\\
&+O_a\Big(\|h\|_{\infty} |I| T^{p+q+t-2d+d'}\int_{1/T^{\alpha}}^a r^{p+q-2d-1} dr\Big)\\
&\hspace{-0.3in}+O_a\Big(\mathcal S_1(h)|I| T^{p+q+t-2d+d'}\int_{1/T^{\alpha}}^a r^{p+q-2d} dr\Big)
+O_a\Big(\mathcal S_1(h)|I| T^{p+q+t-d-1}\int_{1/T^\alpha}^a r^{p+q-d-1}dr\Big) \\
&+O_a\left(\|h\|_{\infty}|I|T^{(1-\alpha)(p+q-d)+t}\right).
\end{split}\]

Finally, since $p+q+t=n-r$ and by definition of $J(h)$,
\[\begin{split}
&\frac {T^{p+q+t-d}} d\int_{\RR^s}\int_{\RR^t}\int_{\RR^t}\int_{-\infty}^\infty \int_{1/T^{\alpha}}^\infty\int_{S^{p-1}\times S^{q-1}}\\
&\hspace{1.1in}h\Big((\omega_1+\omega_2)r+Y'_1\Big)
\rchi_I (\zeta,Y_2,Z) r^{p+q-d-1}d\omega_1 d\omega_2 dr d\zeta dY'_1 dY_2 dZ\\
&=J(h)|I|T^{n-r-d}+O_a\left(\|h\|_{\infty}|I|T^{(1-\alpha)(p+q-d)+t}\right).
\end{split}\]

\textcolor{black}{One can obtain the equation \eqref{eq:smooth:3} after simplifying the error bounds in each case as follows:
\noindent (1) If $p+q\ge 2d+1$, using the fact that $d-d'\ge 1$ and $\mathcal S_1(h)\ge \|h\|_\infty$, we have
\[
\mathcal I_{h,I}=J(h)|I|T^{n-r-d}+ O_a\left(\mathcal S_1(h)|I|T^{p+q+t-d-1}\right)+O_a\left(\|h\|_\infty|I|T^{(1-\alpha)(p+q-d)+t}\right).
\]
\noindent (2) If $p+q=2d$, using the estimates $d'<(1-\alpha)d$, $d-d'\ge 1$ and $\mathcal S_1(h)\ge \|h\|_\infty$, we have
\[
\mathcal I_{h,I}=J(h)|I|T^{n-r-d}+O_a\left(\mathcal{S}_{1}(h) |I| T^{d+t-1}\right)
+O_a\left(\|h\|_\infty |I| T^{(1-\alpha)d+t}\right)
\]
\noindent (3) If $p+q=2d-1$, using $\alpha <1$ and $\mathcal S_1(h)\ge \|h\|_\infty$, we have
\[
\begin{split}
\mathcal I_{h,I}=J(h)|I|T^{n-r-d}
&+O_a\left(\mathcal S_1(h) |I| T^{t+d'-1}\log T\right)+
O_a\left(\mathcal S_1(h) |I| T^{d+t-2}\right)\\
&+O_a\left(\|h\|_\infty |I| T^{(1-\alpha)(d-1)+t}\right),\\ 
\end{split}
\]
\noindent (4) If $d+1\le p+q< 2d-1$, using $d'-d+\alpha d<0$ and $\mathcal S_1(h)\ge \|h\|_\infty$, we have
\[
\begin{split}
\mathcal I_{h,I}
=J(h)|I|T^{n-r-d}&+O_a\left(\mathcal S_1(h)|I| T^{(1-\alpha)(p+q-2d)+t+d'-\alpha}\right)
+O_a\left(\mathcal S_1(h) |I| T^{p+q+t-d-1}\right)\\
&+O_a\left(\|h\|_\infty |I| T^{(1-\alpha)(p+q-d)+t}\right),\\ 
\end{split}
\]
}
\end{proof}

\begin{theorem}\label{volume form} 
Let $p\ge 1,\; q\ge 1 \text{ with }d+1\le p+q\le n-r \text{ and let }$ $(F,M)\in \mathcal Y^{(F_0,M_0)}$.
Fix $N\ge 1$ and let $I\subseteq [-N,N]^{r+1}\subseteq \RR^{r+1}$ be measurable.
Let $0<\xi < \textcolor{black}{\min\left\{\frac {(d-d')(p+q-d)}{p+q-1}, \frac 1 2\right\}}$.
Then there exist $c^{}_{F,M}>0$ and $T_{0}>0$ such that for $T>T_{0}N$,
\[\begin{split}
&\vol\left((F,M)^{-1}(I)\cap B_{T}(0)\right)\\
&=c_{F,M}|I| T^{n-r-d}
+\left\{\begin{array}{lc}
O_{F,M}(|I|N^{1/2}T^{n-r-d-1/2}),
&\begin{array}{c}
\text{if } \textcolor{black}{\left[p+q > 2d-1\right]} \text{ or}\\
\left[p+q=2d-1 \text{ and } d-d'>1\right];
\end{array}\\[0.18in]
O_{F,M}(|I|N^{1/2}T^{n-r-d-1/2}\log T),
&\begin{array}{c}
\text{if } p+q=2d-1 \text{ and}\\
d-d'=1;
\end{array}\\[0.18in]
O_{F,M}(|I|N^{\xi}T^{n-r-d-\xi}),
&\text{if } d+1 \le p+q < 2d-1.\\
\end{array}\right.\end{split}\]
\end{theorem}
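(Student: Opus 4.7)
The plan is to pass to the model pair $(F_0, M_0)$ by a change of variables, bracket the characteristic function $\rchi_{B_T(0)}$ between smooth cut-offs, invoke (a strengthened form of) Lemma \ref{smooth version}, and optimize the smoothing scale.

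\emph{Reduction and smoothing.} Write $(F,M)=(\lambda F_0^{g_1}, g_1 M_0 g_2)$ for some $(\lambda, g_1, g_2)\in (\RR\setminus\{0\})\times \SL_n(\RR)\times \GL_r(\RR)$. The substitution $w=v g_1$ is volume-preserving (as $g_1\in\SL_n(\RR)$) and reduces the problem to the model pair with a rescaled target $\widetilde I\subseteq[-c_1 N, c_1 N]^{r+1}$ of measure $|\lambda|^{-1}|\det g_2|^{-1}|I|$; the Jacobians of this step will be absorbed into $c_{F,M}$. Next, fix $\epsilon\in(0,1/2)$ to be chosen later and select smooth $\phi^\pm\colon\RR^n\to[0,1]$ satisfying $\phi^-\le \rchi_{B_1(0)}\le \phi^+$, $\supp\phi^+\subseteq B_{1+\epsilon}(0)$, $\phi^-\equiv 1$ on $B_{1-\epsilon}(0)$, $\|\phi^\pm\|_\infty=1$, and $\mathcal S_1(\phi^\pm)\ll \epsilon^{-1}$. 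Then $h_T^\pm(v):=\phi^\pm(v/T)$ is supported on $B_{2T}(0)$ and sandwiches $\rchi_{B_T(0)}$ from above and below.

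\emph{Lemma application.} Lemma \ref{smooth version} as stated requires $I\subseteq[-1,1]^{r+1}$, so I first extend it to $\widetilde I\subseteq[-c_1 N, c_1 N]^{r+1}$ by retracing its proof and tracking how $N$ propagates through the change-of-variables bounds, particularly through $|\zeta - P_1(TY_1', Y_2) - P_2(Z)|$ and the subsequent expansion of $r(1+\tfrac{\zeta-P_1-P_2}{(rT)^d})^{1/d}$. The hypothesis $T>T_0 N$ of the theorem is precisely what keeps these estimates intact (in particular $N/T^{d-d'}\to 0$). Applied to $h_T^\pm$, the extended lemma gives
\[
\int h_T^\pm(v)\,\rchi_{\widetilde I}((F_0,M_0)(v))\,dv \;=\; J(\phi^\pm)\,|\widetilde I|\,T^{n-r-d}+\mathcal E^\pm.
\]
Since $\phi^+-\phi^-$ is supported on a shell of radial thickness $\ll \epsilon$, we have $J(\phi^+)-J(\phi^-)=O(\epsilon)$, so the main terms coincide up to $O(\epsilon|\widetilde I|T^{n-r-d})$; this identifies $c_{F,M}:=J(\rchi_{B_1(0)})\cdot|\lambda|^{-1}|\det g_2|^{-1}$.

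\emph{Error optimization.} In the regime $p+q>2d-1$ (or $p+q=2d-1$ with $d-d'>1$) the dominant error from the lemma is the derivative term, of the form $O(\epsilon^{-1}|I|N^{a}T^{n-r-d-1})$, where $a\ge 1$ is tracked from the extended lemma. Balancing against the smoothing error $O(\epsilon |I| T^{n-r-d})$ and choosing $\epsilon\sim\sqrt{N/T}$ (with $a=1$) yields the bound $O(|I|N^{1/2}T^{n-r-d-1/2})$. The borderline subcase $p+q=2d-1$, $d-d'=1$ is identical except for the extra $\log T$ factor coming directly from the lemma. In the remaining regime $d+1\le p+q<2d-1$, the additional error $\mathcal S_1(h)|I|T^{(1-\alpha)(p+q-2d)+t+d'-\alpha}$ becomes controlling; optimizing simultaneously over $\epsilon$ and $\alpha\in(0,(d-d')/d)$ produces a bound of shape $|I|N^\xi T^{n-r-d-\xi}$, with the admissible range of $\xi$ forced exactly to $\xi<\min\{(d-d')(p+q-d)/(p+q-1),\,1/2\}$.

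\emph{Principal obstacle.} The crux is the extension of Lemma \ref{smooth version} to $I\subseteq [-c_1 N,c_1 N]^{r+1}$ with sharp $N$-dependence. This demands careful tracking of every occurrence of the implicit constant $O_a(\cdot)$ in the lemma's proof through the polynomial terms $P_1(TY_1',Y_2)$ and $P_2(Z)$; the $T>T_0 N$ hypothesis must produce the claimed exponents $N^{1/2}$ or $N^\xi$ with no wasted margin, which is delicate because $P_1$'s behavior in $Y_2$ and $P_2$'s behavior in $Z$ are a priori uncontrolled in degree. A secondary subtlety is the three-way balance of errors in the final regime, which is what forces the precise upper bound $\xi<(d-d')(p+q-d)/(p+q-1)$ in addition to the smoothing-balance bound $\xi<1/2$.
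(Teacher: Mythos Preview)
Your overall architecture---reduce to the model pair, bracket $\rchi_{B_T}$ between smooth cutoffs $h^\pm_\delta$ with $\mathcal S_1(h^\pm_\delta)\ll\delta^{-1}$, invoke Lemma~\ref{smooth version}, show $J(h^+)-J(h^-)=O(\delta)$, and optimize over $\delta$ and $\alpha$---matches the paper's proof, including the case split and the resulting constraint $\xi<\min\{(d-d')(p+q-d)/(p+q-1),\tfrac12\}$. The divergence is in how $N>1$ is handled, and here the paper takes a much shorter route that sidesteps precisely the obstacle you flagged.

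Rather than extend Lemma~\ref{smooth version} to targets in $[-cN,cN]^{r+1}$ and track $N$ through every estimate, the paper proves the theorem first for $N=1$ only, and then recovers general $N$ by a scaling argument: set $I'=\{(a_0/N^d,a_1/N,\dots,a_r/N):(a_0,\dots,a_r)\in I\}\subseteq[-1,1]^{r+1}$, so that $(F,M)(v)\in I$ if and only if $(F,M)(v/N)\in I'$, and hence $(F,M)^{-1}(I)\cap B_T(0)=N\bigl((F,M)^{-1}(I')\cap B_{T/N}(0)\bigr)$. Applying the $N=1$ case with $T$ replaced by $T/N$ (this is exactly where the hypothesis $T>T_0N$ is used) and multiplying volumes by $N^n$ produces the exponents $N^{1/2}$ and $N^\xi$ automatically, with no need to reopen the lemma.

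Your direct approach, by contrast, runs into exactly the difficulty you named as the principal obstacle: since $P_2$ is an \emph{arbitrary} polynomial in $Z$ and the $Y_2$-dependence of $P_1$ is unconstrained, once $\|(Y_2,Z)\|\le N$ the quantity $|\zeta-P_1(TY_1',Y_2)-P_2(Z)|$ acquires powers of $N$ governed by $\deg P_2$ and the $Y_2$-degree of $P_1$, not by anything intrinsic to the problem. There is then no mechanism forcing the error down to $N^{1/2}$ or $N^\xi$; your balancing $\epsilon\sim\sqrt{N/T}$ with ``$a=1$'' is not justified at this level of generality. The scaling reduction is the missing idea.
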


\begin{proof}
Let $(F,M)=(\lambda F_{0}^{g_{1}},g_1\M_{0}g_2)$, where $(\lambda, g_{1}, g_2)\in (\RR-\{0\})\times \SL_n(\RR)\times \GL_{r}(\RR)$. 
By replacing $I$ by ${\tiny \left(\hspace{-0.04in}\begin{array}{cc} \lambda & 0 \\ 0 & g_2\end{array}\hspace{-0.04in}\right)^{-1}}\hspace{-0.01in}I$ \textcolor{black}{and $N$ by $\max(\lambda^{-1},\|g_2^{-1}\|)N$,} we may assume that $(\lambda, g_2)=(1, id)$.
We first assume that $N=1$. 

Take $h=\rchi_{B_1(0)}$ the indicator function of the unit ball in $\RR^{n}$.
For $\delta\in (0,1)$, let $h^{\pm}_{\delta}$ be smooth functions on $\RR^{n}$ such that $h^{\pm}_{\delta} \in (0,1)$ and 
\[
h^+_{\delta}(v)=
\left\{\begin{array}{cl}
1, & \text{if } \|v\|\le 1;\\
0, & \text{if } \|v\|\ge 1+\delta, \end{array}\right.
\quad\text{and}\quad
h^-_{\delta}(v)=
\left\{\begin{array}{cl}
1, & \text{if } \|v\|\le 1-\delta;\\
0, & \text{if } \|v\|\ge 1. \end{array}\right.
\] 
We may further assume that $\mathcal S_1(h^{\pm}_{\delta}) \ll 1/\delta$.

Define $h^{\pm}_{\delta,g_1}(v)=h^{\pm}_{\delta}(v g_1^{-1})$. Note that $ \text{supp}(h_{\delta, g_{1}}^{\pm})\subseteq B_{2\|g_1\|}$ and $\mathcal{S}_{1}(h_{\delta,g_{1}}^{\pm})=O(\|g_{1}\| \delta^{-1})$.
Applying Lemma $\ref{smooth version}$ to functions $h_{\delta,g_{1}}^{\pm}$, there is $T_{0}>0$ such that for $T>T_{0}$,
\begin{equation}\label{eq:volume:1}
\begin{split}
&\int_{\RR^n} h_{\delta,g_{1}}^{\pm}\Big(\frac v T\Big)\rchi_{I}(Q_0(v),\M_0(v)) dv
=J(h_{\delta,g_{1}}^{\pm})|I|\:T^{n-r-d}\\
&+\left\{\begin{array}{lc}
\begin{array}{l}
O_{g_1}\left(\delta^{-1}|I|T^{p+q+t-d-1}\right)\\
\hspace{0.5in}+O_{g_1}\left(|I|T^{(1-\alpha)(p+q-d)+t}\right),
\end{array}
&\text{if }p+q \ge 2d+1;\\[0.18in]
\begin{array}{l}
O_{g_1}\left(\delta^{-1} |I| T^{d+t-1}\right)
+O_{g_1}\left(|I| T^{(1-\alpha)d+t}\right)
\end{array}
&\text{if }p+q=2d;\\[0.18in]
\begin{array}{l}
O_{g_1}\left(\delta^{-1} |I| T^{t+d'-1}\log T\right)+
O_{g_1}\left(\delta^{-1} |I| T^{d+t-2}\right)\\
\hspace{0.5in}+O_{g_1}\left(|I| T^{(1-\alpha)(d-1)+t}\right),\\ 
\end{array}
&\text{if }p+q=2d-1;\\[0.18in]
\begin{array}{l}
O_{g_1}\left(\delta^{-1}|I| T^{(1-\alpha)(p+q-2d)+t+d'-\alpha}\right)
\\
+O_{g_1}\left(\delta^{-1}|I| T^{p+q+t-d-1}\right)+O_{g_1}\left(|I| T^{(1-\alpha)(p+q-d)+t}\right),\\ 
\end{array}
&\\[0.15in]
&\hspace{-0.6in}\textcolor{black}{\text{if }
d+1 \le p+q < 2d-1.}
\\
\end{array}\right.
\end{split}
\end{equation}

It is obvious that 
\begin{equation}\label{eq:volume:3}
\mathcal I_{h^-_{\delta, g_1},\; I}
\le \vol((F,\M)^{-1}(I)\cap B_T(0)) 
\le \mathcal I_{h^+_{\delta, g_1},\; I}
\end{equation}
and
\[J(h^+_{\delta, g_1}) \le J(h_{g_1}) \le J(h^-_{\delta, g_1}),
\]
where $h_{g_1}(v)=h(v g_1^{-1})$, and $\mathcal I_{h^{\pm}_{\delta, g_1},\; I}$ and $J(h^{\pm}_{\delta, g_1})$ are defined as in Lemma \ref{smooth version}.

We claim that 
\begin{equation}\label{eq:volume:2}
J(h_{g_1})-J(h^{\pm}_{\delta,g_1})=O_{g_1}(\delta)
\end{equation}
as $\delta$ goes to zero.
Denote by $(r_3, \omega_3)$ the spherical coordinates of $\RR^{t}$ with respect to $\mathcal L^d$-norm, so that $dY'_1=r_3^{t-1}dr_3 d\omega_3$.
Define the new coordinates $(R, \omega)$ of $\left((S^{p-1}\times S^{q-1})\times \textcolor{black}{\mathbb R_{>0}}\right)\times \RR^t$ by
\[\begin{split}
R&=\textcolor{black}{\sqrt[d]{2r^d+r_3^d}};\\
\omega&=(\omega_1, \omega_2, \omega_3, \theta_1,\theta_2),
\end{split}\]
where \textcolor{black}{$(R,\theta=(\theta_1,\theta_2))$ is the spherical coordinates of $\{(\sqrt[d]{2} r, r_3)\in \RR^2_{>0}\}$} with respect to $\mathcal L^d$-norm. Note that $\omega$ is the coordinates of the compact set 
\[
\SS:=\left\{(r, r_3, \omega_1, \omega_2, \omega_3)\in \left((S^{p-1}\times S^{q-1})\times \textcolor{black}{\RR_{>0}}\right)\times \RR^t : 2r^d + r_3^d=1 \;\right\}.
\]

For each $\omega \in \SS$, $J(h_{g_1})-J(h^{\pm}_{\delta,g_1})\neq 0$ only if
$
\|\left((\omega_1+\omega_2)r+Y'_1\right)g_1^{-1}\|
=\|R\omega\: g_1^{-1}\|\in (1-\delta, 1+\delta),
$
or equivalently, 
\[
J(h_{g_1})-J(h^{\pm}_{\delta, g_1})\neq 0\;\text{only if}\;
\textcolor{black}{(1-\delta)\|\omega g_1^{-1}\|^{-1}
\le R
\le (1+\delta)\|\omega g_1^{-1}\|^{-1}.}
\]

Since $\textcolor{black}{drdr_3\ll R\:dRd\theta}$, we have
\begin{equation*}\begin{split}
|J(h_{g_1})-J(h^{\pm}_{\delta,g_1})| 
&\ll \int_{\SS}\int_{(1-\delta)\textcolor{black}{\|\omega g_1^{-1}\|^{-1}}}^{(1+\delta)\textcolor{black}{\|\omega g_1^{-1}\|^{-1}}}
 R^{p+q+t-d-1} \textcolor{black}{\theta_1}^{p+q-d-1} \textcolor{black}{\theta_2}^{t-1} dRd\omega \\
&\ll \int_{\SS}\int_{(1-\delta)\textcolor{black}{\|\omega g_1^{-1}\|^{-1}}}^{(1+\delta)\textcolor{black}{\|\omega g_1^{-1}\|^{-1}}}
R^{p+q+t-d-1} dRd\omega\\
&\ll \int_{\SS} \textcolor{black}{\|\omega g_1^{-1}\|^{-(p+q+t-d)}} \delta d\omega \ll \|g_1^{-1}\|^{-(p+q+t-d)} \delta.
\end{split}\end{equation*}
Here we use the fact that $p+q> d$. We consider several cases.

\vspace{0.2in}
\noindent i) $p+q\ge 2d-1\;(p+q\neq 3)$.

Take $\delta=T^{-1/2}$ and choose any $\alpha$ in $\left[\frac 1 {2(p+q-d)}, \frac {d-d'} d\right)$.
Then by \eqref{eq:volume:1} and \eqref{eq:volume:2},
\[
\mathcal I_{h^\pm_{\delta,g_1},\;I}
=\left\{\begin{array}{lc}
J(h_{g_1})|I|T^{n-r-d}+O_{g_1}\left(|I|T^{n-r-d-1/2}\log T\right),
&\begin{array}{c}
\text{if }p+q=2d-1 \text{ and }\\
 d'=d-1;
 \end{array} \\[0.18in]
J(h_{g_1})|I|T^{n-r-d}+O_{g_1}\left(|I|T^{n-r-d-1/2}\right),
&\text{otherwise.}
\end{array}\right.
\]

Hence the result follows from \eqref{eq:volume:3} and by putting $c^{}_{F,M}=J(h_{g_1})$.

\vspace{0.2in}
\noindent ii) $d+1\le p+q < 2d-1$.

For $0<\xi<\textcolor{black}{\min\left\{\frac {(d-d')(p+q-d)}{p+q-1}, \frac 1 2\right\}}$,
take $\delta=T^{-\xi}$ and pick any $\alpha$ in $\Big[\frac{\xi}{p+q-d},\textcolor{black}{\min\left\{\frac{d-d'}{d},\frac{d-d'-2\xi}{2d-1-p-q}\right\}}\Big)$. 
Again, by \eqref{eq:volume:1}, \eqref{eq:volume:2} and \eqref{eq:volume:3}, we have
\[
\vol\left((F,M)^{-1}(I)\cap B_{T}(0)\right)= c_{F,M}|I|T^{n-r-d}+O_{F,M}\left(|I|T^{n-r-d-\xi}\right),
\]
where $c_{F,M}=J(h_{g_1})$.

\vspace{0.2in}
Now consider the case when $I\subseteq [-N,N]^{r+1}$ for $N\geq 1$. Define
\[
I'=\left\{\left(\frac{a_{0}}{N^{d}},\frac{a_{1}}{N},...,\frac{a_{r}}{N}\right) :  (a_{0},a_{1},...,a_{r})\in I \right\}.
\]

Then $I'\subseteq [-1,1]^{r+1}$, $|I'|=N^{-(r+d)}|I|$ and $(F,M)(v)\in I$ if and only if $(F,M)(N^{-1}v)\in I'$. By taking $w=N^{-1}v$, it follows that
\[
(F,M)^{-1}(I)\cap B_{T}(0)=\left\{Nw\;|\; (F,M)(w)\in I', \|w\|\leq N^{-1}T \right\}=N\left((F,M)^{-1}(I')\cap B_{N^{-1}T}\right).
\]

Therefore, the theorem is deduced from the case of $N=1$.
\end{proof}

\begin{corollary}\label{Simple}
	Let $p\ge1,\; q\geq 1$ with $d+1\le p+q\le n-r$ and let $(F,M) \in \mathcal Y^{(F_0,M_0)}$.
	Fix $N\ge 1$ and let $I\subseteq [-N,N]^{r+1}\subseteq \RR^{r+1}$ be measurable.
	Let $0<\xi < \frac{1}{2d-2}$.
	Then there exist $c^{}_{F,M}>0$ and $T_{0}>0$ such that for $T>T_{0}N$,
\[
\vol\left((F,M)^{-1}(I)\cap B_{T}(0)\right)=c_{F,M}|I|T^{n-r-d}+O_{F,M}\left(|I|N^{\xi}T^{n-r-d-\xi}\log T\right),
\]
where $\log T$ occurs only when $p+q=2d-1$ and $d-d'=1$.
\end{corollary}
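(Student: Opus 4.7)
The plan is to deduce Corollary \ref{Simple} directly from Theorem \ref{volume form} by verifying that, once $\xi < 1/(2d-2)$ is fixed, each of the three case-dependent error terms produced in Theorem \ref{volume form} is dominated (up to the implicit $F,M$-dependent constants) by $|I|N^{\xi}T^{n-r-d-\xi}\log T$, with the $\log T$ needed only when $p+q=2d-1$ and $d-d'=1$.

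First I would check that $\xi < 1/(2d-2)$ lies inside the admissible range of Theorem \ref{volume form} in the regime $d+1\le p+q<2d-1$. The constraint there is
\[
\xi < \min\Bigl\{\tfrac{(d-d')(p+q-d)}{p+q-1},\; \tfrac{1}{2}\Bigr\}.
\]
Since $d-d'\ge 1$, $p+q-d\ge 1$, and $p+q-1\le 2d-3$ in this range, one has $(d-d')(p+q-d)/(p+q-1)\ge 1/(2d-3)>1/(2d-2)$, while $1/2>1/(2d-2)$ is obvious. Hence Theorem \ref{volume form} applies and directly yields the bound $O_{F,M}(|I|N^{\xi}T^{n-r-d-\xi})$ in this regime, with no $\log T$ needed.

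Next I would handle the complementary regimes. In the two subcases \textbf{(a)} $p+q>2d-1$ and \textbf{(b)} $p+q=2d-1$ with $d-d'>1$, Theorem \ref{volume form} gives the error $O_{F,M}(|I|N^{1/2}T^{n-r-d-1/2})$; in the exceptional subcase $p+q=2d-1$ with $d-d'=1$, it gives the same expression multiplied by $\log T$. Choosing $T_0\ge 1$ so that $N/T\le 1$ and using $\xi\le 1/2$, we obtain $(N/T)^{1/2}\le (N/T)^{\xi}$, whence
\[
|I|N^{1/2}T^{n-r-d-1/2}=|I|(N/T)^{1/2}T^{n-r-d}\le |I|(N/T)^{\xi}T^{n-r-d}=|I|N^{\xi}T^{n-r-d-\xi}.
\]
This absorbs subcases \textbf{(a)} and \textbf{(b)} into $O_{F,M}(|I|N^{\xi}T^{n-r-d-\xi})$, and the exceptional subcase into $O_{F,M}(|I|N^{\xi}T^{n-r-d-\xi}\log T)$, matching the statement of the corollary with the same constant $c_{F,M}=J(h_{g_1})$ provided by Theorem \ref{volume form}.

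There is no genuine obstacle here; the only things to verify are the two elementary inequalities above. The mild subtlety worth highlighting is the numerical check that $1/(2d-2)$ is strictly below $(d-d')(p+q-d)/(p+q-1)$ for every admissible $p,q,d,d'$ with $d+1\le p+q<2d-1$, which is what forces the uniform threshold $\xi<1/(2d-2)$ rather than the sharper but case-dependent threshold of Theorem \ref{volume form}. The logarithmic factor is retained in the final bound precisely because Theorem \ref{volume form} produces it unavoidably in the single subcase $p+q=2d-1$, $d-d'=1$, in agreement with the remark appended to the corollary.
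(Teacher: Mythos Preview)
Your proposal is correct and follows essentially the same approach as the paper's proof: verify that $\xi<\tfrac{1}{2d-2}$ falls within the admissible range of Theorem~\ref{volume form} in the regime $d+1\le p+q<2d-1$, and then use $(N/T)^{1/2}\le (N/T)^{\xi}$ for $T>T_0 N$ to absorb the $N^{1/2}T^{n-r-d-1/2}$ error into $N^{\xi}T^{n-r-d-\xi}$. The paper states these two points in a single sentence; you have simply written out the elementary checks in more detail.
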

\begin{proof}
	It follows easily from Theorem \ref{volume form}. Note that for $d+1\le p+q<2d-1$, $\frac{1}{2d-2}<\frac{(d-d')(p+q-d)}{p+q-1}$ and for $T>T_{0}N$, $N^{1/2}T^{n-r-d-1/2}\ll N^{\xi}T^{n-r-d-\xi}$.
\end{proof}

In particular, taking $\xi=\frac{1}{2d}$ in the above corollary, we get that

\begin{equation}\label{eq:simple}
	\vol\left((F,M)^{-1}(I)\cap B_{T}(0)\right)=c_{F,M}|I|T^{n-r-d}+O_{F,M}\left(|I|N^{1/2d}T^{n-r-d-1/2d}\log T\right).
\end{equation}

We will use this simplified version of volume form in subsequent theorems.

\section{Discrepancy estimates}\label{Section 3}
\textcolor{black}{In this section, we prove Theorem \ref{thm:main1_new} and Corollary \ref{cor:main1_new}. We continue to use the notations in Section \ref{sec:classification}.
We will regard $\mathcal Y^{(F_0,M_0)}$ as the union of $\SL_n(\RR)$-slices}
\[\mathcal Y_{g_2^{\lambda}}:=\left\{(\lambda F_0^{g_1},g_1\M_0 g_2) : g_1\in \SL_n(\RR)\right\},\]
\textcolor{black}{over all $(\lambda, g_2) \in (\RR-\{0\})\times \GL_r(\RR)$. 
Since an $(\RR-\{0\})\times \SL_n(\RR)\times \GL_r(\RR)$-invariant measure on $\mathcal Y^{(F_0,M_0)}$ is the push-forward measure of the product of Haar measures on $\RR-\{0\}$, $\SL_n(\RR)$ and $\GL_r(\RR)$, respectively, 
it suffices to show that \textcolor{black}{for all $(\lambda, g_2)\in (\RR-\{0\})\times \GL_r(\RR)$} and for almost every $(F,M)\in \mathcal Y_{g_2^{\lambda}}$, the statements of Theorem \ref{thm:main1_new} and Corollary \ref{cor:main1_new} holds.}

\begin{theorem}\label{thm:main1}
		Let $0\le \kappa<n-r-d$. Let $\{I_{t}\}_{t>0}$ be a \textcolor{black}{non-increasing} family of bounded measurable subsets of $\RR^{r+1}$ with $|I_{t}|=\textcolor{black}{ct^{-\kappa}}$ for some $c>0$. Then there is $\nu >0$ such that for almost every $(F,M)\in \mathcal{Y}_{g_2^{\lambda}}$, there exists $c_{F,M}>0$ such that 
		\[
		\#\{v\in \ZZ^{n} : (F,M)(v)\in I_{t}, \|v\|\le t\}=c_{F,M}|I_{t}|t^{n-r-d}+O_{F,M}(t^{n-r-d-\kappa-\nu}).
		\]
	\end{theorem}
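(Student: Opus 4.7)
The plan is to follow the strategy of Kelmer--Yu~\cite{KY18}, combining the volume estimate (Corollary~\ref{Simple} / equation \eqref{eq:simple}) with Rogers' second moment formula and a Borel--Cantelli argument along a geometric sequence.

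First I would recast the count as a Siegel transform. For $(F,M) = (\lambda F_0^{g_1}, g_1 M_0 g_2) \in \mathcal Y_{g_2^\lambda}$, setting $\widehat f(\Lambda) := \sum_{v \in \Lambda \setminus \{0\}} f(v)$ and $E_t(F,M) := (F,M)^{-1}(I_t) \cap B_t(0)$, we have
\[
N_t(F,M) := \#\{v \in \ZZ^n \setminus \{0\} : (F,M)(v) \in I_t,\ \|v\| \le t\} = \widehat{\chi_{E_t(F,M)}}(\ZZ^n).
\]
After the change of variables $w = v g_1$, this rewrites as $\widehat{\chi_{\tilde E_t(g_1)}}(\ZZ^n g_1)$, where $\tilde E_t(g_1) = (\lambda F_0, M_0 g_2)^{-1}(I_t) \cap B_t(0) g_1$, so $N_t$ appears as a Siegel transform evaluated at the lattice $\ZZ^n g_1 \in X_n = \SL_n(\RR)/\SL_n(\ZZ)$. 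The expected main term is $\vol(E_t(F,M))$, which by Corollary~\ref{Simple} (taking $\xi = 1/(2d)$ as in \eqref{eq:simple}, with $N = 1$ on bounded $I_t$) equals $c_{F,M} |I_t| t^{n-r-d} + O_{F,M}(t^{n-r-d-\kappa-1/(2d)} \log t)$.

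Next I would apply Rogers' second moment formula on $X_n$ to bound the variance
\[
\int \bigl|N_t(F,M) - \vol(E_t(F,M))\bigr|^2 \, d(F,M),
\]
the integral being taken over a compact piece of $\mathcal Y_{g_2^\lambda}$ (where $\|g_1\|, \|g_1^{-1}\|$ are uniformly bounded, so $B_t(0)g_1$ is comparable to $B_t(0)$). Rogers' formula gives an $L^2$ bound of order $\|\chi_{\tilde E_t(g_1)}\|_2^2 = \vol(\tilde E_t(g_1)) \ll |I_t|\, t^{n-r-d} \asymp t^{n-r-d-\kappa}$, up to cross terms of lower order. Then Chebyshev's inequality yields
\[
\mu\bigl\{(F,M) : |N_t - c_{F,M}|I_t|t^{n-r-d}| > t^{n-r-d-\kappa-\nu}\bigr\} \;\ll\; t^{-(n-r-d-\kappa) + 2\nu}.
\]

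Choosing $0 < \nu < (n-r-d-\kappa)/2$ and sampling along the geometric sequence $t_k = 2^k$, the right-hand side is summable in $k$, so Borel--Cantelli gives that for almost every $(F,M)$ the error bound holds at every $t_k$ for all sufficiently large $k$. For general $t \in [t_k, t_{k+1}]$ one interpolates using the monotonicity assumption on $\{I_t\}$ and the rough monotonicity of $N_t$ in $t$: sandwich $N_t$ between $N_{t_k}$ and $N_{t_{k+1}}$, and compare each with $c_{F,M}|I_t|t^{n-r-d}$; the loss $(t_{k+1}/t_k)^{n-r-d} = 2^{n-r-d}$ is absorbed into the constant, while the relative error remains $O(t^{-\nu})$ after possibly shrinking $\nu$. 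Fubini on $(\lambda, g_2)$ then gives the full statement of Theorem~\ref{thm:main1_new}, and Corollary~\ref{cor:main1_new} follows by specializing $I_t$ to be the product box of the given inequalities.

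The main obstacle is the bookkeeping around Rogers' formula: because $\tilde E_t(g_1)$ depends on $g_1$ through the ellipsoid $B_t(0)g_1$, one cannot naively apply Rogers to a single test function on $X_n$. I expect to deal with this by localizing to compact sets in $\SL_n(\RR)$ and using uniform containment $B_t(0)g_1 \subset B_{Ct}(0)$, at the cost of a constant depending on the chosen compact set; the almost-everywhere conclusion on $\mathcal Y_{g_2^\lambda}$ then follows by exhausting with an increasing family of compacts. The second delicate point is that the numerical relation $\kappa < n-r-d$ is exactly what makes the Chebyshev tail summable, i.e.\ the hypothesis of the theorem is saturated by this step.
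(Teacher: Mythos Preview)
Your overall architecture (Rogers' second moment $\Rightarrow$ Chebyshev $\Rightarrow$ Borel--Cantelli along a sampling sequence, plus the volume estimate~\eqref{eq:simple}) is exactly the paper's, but two of your technical choices would break the proof as written.

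\textbf{The sampling sequence cannot be geometric.} With $t_k=2^k$, even the correct sandwich (using $I_{t_{k+1}}$ and ball of radius $t_k$ from below, $I_{t_k}$ and radius $t_{k+1}$ from above) has upper and lower main terms differing by the fixed factor $2^{n-r-d}$; hence for $t\in[t_k,t_{k+1})$ the discrepancy $\bigl|N_t-c_{F,M}|I_t|t^{n-r-d}\bigr|$ is only bounded by a constant times the main term, not by $O(t^{n-r-d-\kappa-\nu})$. Your claim that ``the loss $2^{n-r-d}$ is absorbed into the constant'' is fine for an order-of-magnitude statement but not for an asymptotic with a specified leading coefficient. (Also, $N_t$ is not monotone in $t$: enlarging $t$ widens the ball but shrinks $I_t$, so the sentence ``sandwich $N_t$ between $N_{t_k}$ and $N_{t_{k+1}}$'' is not literally valid.) The paper instead takes $t_k=k^{\alpha}$ with $\alpha$ large, so that $t_{k+1}/t_k=1+O(1/k)$ and the gap between the sandwiching volumes is genuinely of order $k^{\alpha(n-r-d-\kappa)-1}$, i.e.\ strictly smaller than the main term.

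\textbf{The $g_1$-dependence needs a fine cover, not a crude one.} Replacing $B_t(0)g_1$ by $B_{Ct}(0)$ uniformly on a compact set has the same defect: you lose the constant $c_{F,M}$. The paper resolves this (following Kelmer--Yu, Lemma~2.1) by covering the compact set $\mathcal K\subset\SL_n(\RR)$ with $O(k^{d_n})$ neighbourhoods $\mathcal U_{\epsilon_k}h$ of radius $\epsilon_k=1/k$; for $g\in\mathcal U_{\epsilon_k}h$ and $t_k\le t<t_{k+1}$ one gets a two-sided inclusion $\underline A_{k,h}\subset A_{g,I_t,t}\subset\overline A_{k,h}$ between \emph{$g$-independent} sets, to which Rogers can be applied directly. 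The cardinality $O(k^{d_n})$ of the cover and the exponent $\alpha$ are then chosen jointly so that the Borel--Cantelli sum $\sum_k k^{d_n}\cdot k^{(1-2\delta)\alpha(n-r-d-\kappa)}$ converges.

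Both fixes are standard (Schmidt-type) refinements, and once you replace $2^k$ by $k^{\alpha}$ and the crude containment by the $\epsilon_k$-cover, your outline becomes exactly the paper's Lemma~\ref{Discrepancy} and its application.
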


\vspace{0.2in}
Given a lattice $\Lambda \subset \RR^{n}$ and a finite volume set $A\subseteq \RR^{n}$, we define \textcolor{black}{\emph{the discrepancy of lattice points in $A$}} by
\begin{equation}\label{def:disc}
D(\Lambda, A)=|\#(\Lambda\cap A)-\vol(A)|.
\end{equation}

\begin{lemma}\label{Discrepancy}
	Let $0\le \kappa<n-r-d$. Let $\{I_{t}\}_{t>0}$ be a \textcolor{black}{non-increasing} family of bounded measurable subsets of $\RR^{r+1}$ with $|I_{t}|=\textcolor{black}{ct^{-\kappa}}$ for some $c>0$. Then there exists some $\delta \in (0,1)$ such that for almost every $g\in \SL_n(\RR)$ there exists $t_{g}>0$ such that for all $t\geq t_{g}$,
\begin{equation}	
D(\ZZ^{n}g,A_{g,I_{t},t})< \vol(A_{g,I_{t},t})^{\delta},
\end{equation}	
where $A_{g,I_{t},t}:=(F_{0},M_{0})^{-1}(I_{t})\cap B_{t}(0)g$ with $(F_0,M_0)$ as in \eqref{eqn:def}.
	\end{lemma}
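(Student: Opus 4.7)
The plan follows the Siegel-transform second-moment strategy of \cite{AM18, KY18}. First I would invoke the volume estimate \eqref{eq:simple}: since $\{I_t\}_{t>0}$ is a non-increasing family of bounded sets, the finite quantity $N := \sup_{x \in I_1} \|x\|$ bounds all $I_t$ uniformly, hence
\[
\vol(A_{g,I_t,t}) = c_{g}\, |I_t|\, t^{n-r-d} + O_g\bigl(|I_t|\, t^{n-r-d-1/(2d)}\log t\bigr) \asymp_g t^{n-r-d-\kappa}
\]
for $t$ sufficiently large. The assumption $\kappa < n-r-d$ ensures that this volume grows polynomially, which is essential for what follows.

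Next I would compute the variance of the Siegel transform $\widehat{\chi_A}(g\ZZ^n) := \sum_{v \in \ZZ^n \setminus \{0\}} \chi_A(vg)$ via Rogers' second moment formula: since $n \ge p+q \ge d+1 \ge 3$, one has
\[
\int_{\SL_n(\RR)/\SL_n(\ZZ)} \bigl( \widehat{\chi_A}(g\ZZ^n) - \vol(A) \bigr)^2 \, d\mu(g) \ll \vol(A) + 1
\]
uniformly in bounded Borel $A \subset \RR^n$. Using $\#(\ZZ^n g \cap A) = \widehat{\chi_A}(g \ZZ^n) + O(1)$ (the zero-vector correction) and Chebyshev's inequality then gives, for any $\delta \in (0,1)$,
\[
\mu\bigl\{ g \in \SL_n(\RR)/\SL_n(\ZZ) : D(\ZZ^n g, A_{g,I_t,t}) \ge \vol(A_{g,I_t,t})^\delta \bigr\} \ll t^{(1-2\delta)(n-r-d-\kappa)}.
\]

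Finally, fix $\delta \in (2/3, 1)$ and choose $\alpha$ in the non-empty interval $\bigl(\tfrac{1}{(2\delta-1)(n-r-d-\kappa)},\, \tfrac{1}{(1-\delta)(n-r-d-\kappa)}\bigr)$. Setting $t_k := k^\alpha$, the lower bound on $\alpha$ makes the Chebyshev bounds summable over $k$, so Borel--Cantelli yields the discrepancy estimate along the sequence $\{t_k\}$ for almost every $g$. For general $t \in [t_{k-1}, t_k]$, the non-increasing property of $\{I_t\}$ combined with $B_{t_{k-1}}(0) \subseteq B_t(0) \subseteq B_{t_k}(0)$ gives the sandwich $A_{g, I_{t_k}, t_{k-1}} \subseteq A_{g, I_t, t} \subseteq A_{g, I_{t_{k-1}}, t_k}$. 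The upper bound on $\alpha$ forces the volume difference between these two bracketing sets to be $\asymp t_k^{n-r-d-\kappa-1/\alpha}$, which is $o\bigl(\vol(A_{g, I_t, t})^\delta\bigr)$; combining this volume sandwich with the lattice-point sandwich yields the bound for all large $t$. The hard part is precisely the balancing in this last step: Borel--Cantelli convergence pushes $\alpha$ up, while the volume-slab estimate pushes $\alpha$ down, and the two constraints are simultaneously satisfiable exactly when $\delta > 2/3$, which is why the constant $\delta$ in the lemma must be taken in $(0,1)$ rather than arbitrarily close to $0$.
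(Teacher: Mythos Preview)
Your overall strategy---Rogers' second moment, Chebyshev, Borel--Cantelli along a polynomial subsequence, then a sandwich argument for intermediate $t$---matches the paper's. However, there is a genuine gap in your Chebyshev step. Rogers' second moment formula controls $\int D(\ZZ^n g, A)^2\,d\mu(g)$ for a \emph{fixed} set $A$, but your set $A_{g,I_t,t} = (F_0,M_0)^{-1}(I_t) \cap B_t(0)g$ depends on $g$ through the factor $B_t(0)g$. The count $\#(\ZZ^n g \cap A_{g,I_t,t})$ equals $\#\{v \in \ZZ^n : (F_0,M_0)(vg) \in I_t,\ v \in B_t(0)\}$, which is not of the form $\widehat{\chi_A}(g\ZZ^n)$ for any fixed $A$: the ball condition is on $v$, not on $vg$. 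Consequently the displayed Chebyshev bound does not follow from Rogers' formula as stated, and your sandwich $A_{g,I_{t_k},t_{k-1}} \subseteq A_{g,I_t,t} \subseteq A_{g,I_{t_{k-1}},t_k}$ still leaves $g$-dependent bracketing sets, so the problem persists after discretising in $t$.

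The paper fixes this by an additional discretisation in $g$. It restricts to a compact $\mathcal K \subset \SL_n(\RR)$, covers $\mathcal K$ by $\epsilon_k$-neighbourhoods $\mathcal U_{\epsilon_k}h$ of finitely many points $h \in \mathcal I_k$ with $\#\mathcal I_k = O(k^{d_n})$, $d_n = \tfrac{(n+2)(n-1)}{2}$, and observes that for $g \in \mathcal U_{\epsilon_k}h$ one has $B_{(1-\epsilon_k)t_k}(0)h \subseteq B_t(0)g \subseteq B_{(1+\epsilon_k)t_{k+1}}(0)h$. This yields \emph{$g$-independent} sandwich sets $\underline A_{k,h}, \overline A_{k,h}$ to which Rogers' bound (Theorem~2.2 of \cite{KY18}) legitimately applies. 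The price is an extra factor $k^{d_n}$ in the Borel--Cantelli sum, which forces $\alpha > \max\{2d,\, (d_n+3)/(n-r-\kappa-d)\}$ rather than your constraint, and correspondingly $\delta_0 = 1 - 1/(\alpha(n-r-\kappa-d))$ rather than $2/3$. Your analysis of the $\alpha$-window is internally consistent but applies to a simpler problem than the one actually posed.
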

	\begin{proof}
		Let $\mathcal{K}\subset \SL_n(\RR)$ be compact and consider a sequence $\{t_{k}=k^{\alpha}\}_{k\in \NN}$, where $\alpha >\max\left\{2d,\frac{d_{n}+3}{n-r-\kappa-d}\right\}$ with $d_{n}=\frac 1 2 (n+2)(n-1)$.
		Let $\delta_{0}=1-\frac{1}{\alpha(n-r-\kappa-d)}$. For $t>0$ and $\delta \in (\delta_{0},1)$, define 
		\[\mathcal{B}_{t}:=\{g\in \mathcal{K} : D(\ZZ^{n}g,A_{g,I_{t},t})\geq \vol(A_{g,I_{t},t})^{\delta} \}.\]	
		Then it suffices to show that $\overline{\lim}_{t\rightarrow \infty} \mathcal B_t$ is a null set.

	   For each $k\in \NN$, let $\epsilon_k=1/k$. 
	   By Lemma 2.1 in \cite{KY18}, there is a finite subset $\mathcal I_k \subset \mathcal K$ with $\# \mathcal I_k=O_{\mathcal K}(k^{d_n})$, $d_n=(n+2)(n-1)/2$ such that $\mathcal K \subset \bigcup_{h\in \mathcal I_k} \mathcal U_{\epsilon_k}h$.
	   Here, $\mathcal U_{\epsilon_k}$ is a $\epsilon_k$-neighborhood of the identity in $\SL_n(\RR)$, with respect to the operator norm (acting on $\RR^n$). For each $k\in \NN$ and $h\in \mathcal I_k$, set
	   $$\underline{A}_{k,h}=(F_{0},M_{0})^{-1}(I_{t_{k+1}})\cap B_{(1-\epsilon_{k})t_{k}}h,$$ 		 $$\overline{A}_{k,h}=(F_{0},M_{0})^{-1}(I_{t_{k}})\cap B_{(1+\epsilon_{k})t_{k+1}}h.$$
		 Then for $t_{k}\le t<t_{k+1}$ and $g\in \mathcal U_{\epsilon_k}h$, we get $\underline A_{k,h} \subseteq A_{g,I_t,t} \subseteq \overline A_{h,k}$. Let
		 $$T_{k,h}=\vol(\underline{A}_{k,h})^{\delta}-\vol(\overline{A}_{k,h}\setminus\underline{A}_{k,h}).$$ 
		 
		 Define
   	 $$\mathcal{M}_{A,T}:=\{g\in \mathcal{K} : D(\ZZ^{n}g,A)\geq T \}.$$
   	 Then Theorem 2.2 of \cite{KY18}, which is deduced from Rogers' second moment formula \cite{Rog55}, says that 
   	 \begin{equation}\label{eqn:secmom}
   	 \mu(\mathcal M_{A,T})\ll_{\mathcal K} \frac{\vol(A)} {T^2},
   	 \end{equation}
   	 \textcolor{black}{where $\mu$ is the normalized $\SL_n(\RR)$-invariant measure on $\SL_n(\ZZ)\setminus\SL_n(\RR)$.}
   	 
	    Moreover, as in the proof of Theorem 6 of \cite{KY18}, we have that
	   \[\bigcup\limits_{t_{k}\leq t <t_{k+1}}\mathcal{B}_{t}\subset \bigcup\limits_{h\in \mathcal{I}_{k}}\mathcal{C}_{k,h},\]
		where $\mathcal{C}_{k,h}=\mathcal{M}_{\underline{A}_{k,h},T_{k,h}}\cup \mathcal{M}_{\overline{A}_{k,h},T_{k,h}}.$
		
		By equation \eqref{eq:simple}, since $|I_{t_{k}}|=\textcolor{black}{ct_{k}^{-\kappa}}$, $(1+\epsilon_{k})^{n-r-d}=\left(1+\frac {1}{k}\right)^{n-r-d}=1+O\left(\frac {1}{k}\right)$ 
		and $t_{k+1}^{n-r-d}=(k+1)^{\alpha(n-r-d)}=k^{\alpha(n-r-d)}\left(1+O_{\kappa}\left(\frac {1}{k}\right)\right),$ we get that
		\[
		\vol(\overline{A}_{k,h})=c_{h}c\:k^{\alpha(n-r-\kappa-d)}\\
		+O_{\mathcal{K},c, \kappa}\left(k^{\alpha(n-r-\kappa-d)-1}+k^{\alpha(n-r-\kappa-d-1/2d)}\log(k+1)\right).
		\]

	Since $\alpha>2d$, we have $\alpha(n-r-\kappa-d)-1>\alpha(n-r-\kappa-d-1/2d)$ and hence
		\[
		\vol(\overline{A}_{k,h})=c_{h}c\:k^{\alpha(n-r-\kappa-d)}+O_{\mathcal{K},c,\kappa}\left(k^{\alpha(n-r-\kappa-d)-1}\right).
		\]
		
		Similarly, for sufficiently large $k$,
		\[
		\vol(\underline{A}_{k,h})=c_{h}c\:k^{\alpha(n-r-\kappa-d)}+O_{\mathcal{K},c,\kappa}\left(k^{\alpha(n-r-\kappa-d)-1}\right).
		\]
		\textcolor{black}{Therefore,
		\[
		\vol(\underline{A}_{k,h})^{\delta}\asymp_{\mathcal{K},c} k^{\delta \alpha(n-r-\kappa-d)}\text{ and } \vol(\overline{A}_{k,h}\setminus\underline{A}_{k,h})\ll_{\mathcal{K},c,\kappa} k^{ \alpha(n-r-\kappa-d)-1}.
		\]
       Since $\delta>1-\frac{1}{\alpha(n-r-\kappa-d)}$, we get
		\[
		T_{k,h}=\vol(\underline{A}_{k,h})^{\delta}-\vol(\overline{A}_{k,h}\setminus\underline{A}_{k,h})\asymp_{\mathcal{K},c,\kappa} k^{\delta \alpha(n-r-\kappa-d)}.
		\]
	}
		By using \eqref{eqn:secmom}, we have
		\[
		\mu(\mathcal{C}_{k,h})\ll_{\mathcal{K}}\dfrac{\vol(\overline{A}_{k,h})}{T_{k,h}^{2}}\asymp_{\mathcal{K},c,\kappa} k^{\alpha(n-r-\kappa-d)(1-2\delta)}.
		\]
		Now since $\# \mathcal{I}_{k}=O(k^{d_{n}})$, we get
		\[
		\mu( \bigcup\limits_{t_{k}\leq t <t_{k+1}}\mathcal{B}_{t})\leq \sum\limits_{h\in \mathcal{I}_{k}}\mu(\mathcal{C}_{k,h})\ll_{\mathcal{K},c,\kappa}\frac{1}{k^{\alpha(n-r-\kappa-d)(2\delta-1)-d_{n}}}.		\]
		Since $\alpha>\max\left\{2d, \frac{d_n+3}{n-r-\kappa-d} \right\}$ \textcolor{black}{and $\delta >\delta_{0}$} we have $\alpha(n-r-\kappa-d)(2\delta-1)-d_{n}>1$ and hence
		\[\mu\left(\overline{\lim_{t\rightarrow \infty}}\mathcal{B}_{t}\right)
		\leq \lim_{m\rightarrow\infty}\sum_{k=m}^{\infty}\mu\Big( \bigcup\limits_{t_{k}
		\leq t <t_{k+1}}\mathcal{B}_{t}\Big)\leq \lim_{m\rightarrow \infty}\sum_{k=m}^{\infty}\frac{1}{k^{\alpha(n-r-\kappa-d)(2\delta-1)-d_{n}}}=0.\]
		
		Hence \textcolor{black}{for given $\delta\in (\delta_{0},1)$, it follows that} for almost every $g\in \SL_n(\RR)$ and for sufficiently large t, $	D(\ZZ^{n}g,A_{g,I_{t},t})< \vol(A_{g,I_{t},t})^{\delta}.$
	\end{proof}
	
We are now ready for the proof of Theorem \ref{thm:main1}.
\begin{proof}[Proof of Theorem \ref{thm:main1}] 
		For $\delta \in (\delta_{0},1)$, where $\delta_{0}$ is as in the proof of Lemma \ref{Discrepancy}, let $$0<\nu<(1-\delta)(n-r-\kappa-d).$$ 
		
		By replacing $\{I_{t}\}$ with $\{I_t(g_2^{\lambda})^{-1}\}$  in Lemma \ref{Discrepancy},
		we obtain that for almost every $g_{1}\in \SL_n(\RR)$ and for sufficiently large t,
		\begin{equation}\label{eqn:mainthm:1}
		D(\ZZ^{n}g_{1},A_{g_{1},I_t (g_2^{\lambda})^{-1},t})< \vol(A_{g_{1},I_t (g_2^{\lambda})^{-1},t})^{\delta}.\end{equation}
		
		For $(F,M)\in \mathcal{Y}_{g_2^{\lambda}}$, denote 
		$$\mathcal{N}_{(F,M)}(I_{t},t):= \#\{v\in \ZZ^{n} : (F,M)(v)\in I_{t}, \|v\|\le t\}.$$ 
		
		Then,
		\[
		\begin{split}
		\mathcal{N}_{(F,M)}(I_{t},t)&= \# (\ZZ^{n}\cap (F,M)^{-1}(I_{t})\cap B_{t}(0))\\
		&=\# (\ZZ^{n}g_{1}\cap (F_{0},M_{0})^{-1}(I_t (g_2^{\lambda})^{-1})\cap B_{t}(0)g_{1})\\
		&=\#(\ZZ^{n}g_{1}\cap A_{g_{1},I_t(g_2^{\lambda})^{-1},t}).
		\end{split}
		\]
		
		Hence for almost every $(F,M)\in \mathcal{Y}_{g_2^{\lambda}}$ and for sufficiently large t,
		\[
		\begin{split}
		&\left|\mathcal{N}_{(F,M)}(I_{t},t)-c_{F,M}|I_{t}|t^{n-r-d}\right|\\
		&\hspace{0.4in}\leq D(\ZZ^{n}g_{1},A_{g_{1},I_t (g_2^{\lambda})^{-1},t})+\left|\vol(A_{g_{1},I_t (g_2^{\lambda})^{-1},t})-c_{F,M}|I_{t}|t^{n-r-d}\right|\\
		&\hspace{0.4in}\leq \vol(A_{g_{1},I_t (g_2^{\lambda})^{-1},t})^{\delta}+
		O_{F,M,c}\left(t^{n-r-d-\kappa-1/2d}\log t\right)\\
		&\hspace{0.4in}\leq (2c_{F,M}ct^{n-r-d-\kappa})^{\delta}
		+O_{F,M,c}\left(t^{n-r-d-\kappa-1/2d}\log t\right)\\
		&\hspace{0.4in}\le t^{n-r-d-\kappa-\nu},
		\end{split}
		\]
		thus proving the theorem.
	\end{proof}
As an immediate corollary we  have,	
\begin{corollary}\label{cor:main1}
	Let $0\le\kappa<n-r-d$ and $(\kappa_{0},\kappa_{1},\ldots,\kappa_{r})\in \RR_{\ge 0}^{r+1}$ be such that $\sum_{i=0}^r \kappa_{i}=\kappa$. 
	Then for any $\xi=(\xi_0,\xi_1,\ldots,\xi_{r})\in \RR^{r+1}$ and for almost every $(F,M)\in \mathcal{Y}_{g_2^{\lambda}}$, the system of inequalities
	\[\left\{\begin{array}{l}
	|F(v)-\xi_0|<t^{-\kappa_0};\\
	|l_i(v)-\xi_{i}|<t^{-\kappa_{i}},\;1\le i\le r;\\
	\|v\|\le t,
	\end{array}\right.\]
	has integer solutions for sufficiently large t, \textcolor{black}{where $(l_1,\ldots,l_r)=M.$ }
\end{corollary}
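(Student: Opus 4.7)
The plan is to deduce this corollary directly from Theorem \ref{thm:main1} by choosing the shrinking sets $\{I_t\}_{t>0}$ to be products of intervals centred at the target point $\xi=(\xi_0,\xi_1,\ldots,\xi_r)$ whose side lengths are prescribed by the exponents $\kappa_0,\ldots,\kappa_r$. More precisely, for each $t>0$ I would set
\[
I_t := \prod_{i=0}^{r}\bigl(\xi_i - t^{-\kappa_i},\; \xi_i + t^{-\kappa_i}\bigr) \subset \RR^{r+1},
\]
so that $|I_t| = 2^{r+1} t^{-\kappa}$ with $\kappa=\sum_{i=0}^r \kappa_i$. Since $\kappa_i\ge 0$ for each $i$, the side lengths are non-increasing in $t$, hence $\{I_t\}_{t>0}$ is a non-increasing family of bounded measurable subsets of $\RR^{r+1}$ whose measure is of the form $ct^{-\kappa}$ with $c=2^{r+1}$.

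Next, I would apply Theorem \ref{thm:main1} to this family. Since $0\le \kappa<n-r-d$ by hypothesis, the theorem provides a $\nu>0$ such that for almost every $(F,M)\in \mathcal{Y}_{g_2^{\lambda}}$ there exists $c_{F,M}>0$ with
\[
\#\{v\in \ZZ^{n} : (F,M)(v)\in I_{t},\; \|v\|\le t\}
= c_{F,M}\,|I_t|\,t^{n-r-d} + O_{F,M}\bigl(t^{n-r-d-\kappa-\nu}\bigr).
\]
The main term equals $2^{r+1} c_{F,M}\, t^{n-r-d-\kappa}$, which strictly dominates the error $O(t^{n-r-d-\kappa-\nu})$ as $t\to\infty$ because $\nu>0$ and $c_{F,M}>0$. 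Therefore, for every sufficiently large $t$, the integer counting function is strictly positive, which is exactly the statement that the system of inequalities
\[
|F(v)-\xi_0|<t^{-\kappa_0},\quad |l_i(v)-\xi_i|<t^{-\kappa_i}\; (1\le i \le r),\quad \|v\|\le t
\]
admits at least one solution $v\in\ZZ^n$.

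I do not anticipate any real obstacle here: the corollary is essentially an unpacking of Theorem \ref{thm:main1} on a natural family of product boxes, and the only thing to verify is that the chosen $\{I_t\}$ satisfies the hypotheses of that theorem, which is immediate from the construction. The one subtle point worth mentioning explicitly in the write-up is that the exceptional null set in $\mathcal{Y}_{g_2^{\lambda}}$ furnished by Theorem \ref{thm:main1} depends a priori on the family $\{I_t\}$, but since the corollary only asserts the existence of integer solutions for a \emph{fixed} target $\xi$ and a fixed tuple $(\kappa_0,\ldots,\kappa_r)$, a single application of the theorem to the family associated to this data is sufficient and no uniformity over $\xi$ is required.
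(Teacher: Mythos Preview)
Your proposal is correct and matches the paper's approach exactly: the paper presents this as an immediate corollary of Theorem~\ref{thm:main1} with no separate proof, and your argument---applying the theorem to the product boxes $I_t=\prod_{i=0}^r(\xi_i-t^{-\kappa_i},\xi_i+t^{-\kappa_i})$ and observing that the main term dominates the error---is precisely the intended deduction.
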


\section{Uniform approximation}
As in Section \ref{Section 3}, to show Theorem \ref{thm:main2_new} and Corollary \ref{cor1.4}, it suffices to show the theorems \textcolor{black}{for all $(\lambda, g_2)\in (\RR-\{0\})\times \GL_r(\RR)$} and for almost all $(F,\M)\in \mathcal Y_{g_2^\lambda}$.
Let us first show the following theorem.

\begin{theorem} \label{uniform}
	 Let $0\leq \eta < \min\left\{1,\frac{n-r-d}{r+1}\right\}$ and $0\le\kappa< \frac{n-r-d-(r+1)\eta}{r+2}.$ 
	 Let $N(t)$ be a non-decreasing function such that $N(t)=O(t^{\eta})$. 
	 Then there exists some $\delta \in (0,1)$ such that 
	 for almost every $(F,M)\in \mathcal{Y}_{g_2^{\lambda}}$, there exists $t_{F,M}>0$ satisfying the following:
	 for all $t>t_{F,M}$ and for \textcolor{black}{all $I\subset [-N(t),N(t)]^{r+1}$} of the form $I=I_0\times I_1\times \cdots \times I_{r}$ with $|I_j|=t^{-\kappa_j}$ \textcolor{black}{where $\kappa_{j}\ge 0$} and $\sum_{j=0}^{r}\kappa_j =\kappa$,
	\begin{equation}\label{eqn:thm4.1:1}
	\left|\mathcal{N}_{F,M}(I,t)-c_{F,M}t^{n-r-d-\kappa}\right|<t^{\delta(n-r-d-\kappa)}.
	\end{equation}
	\end{theorem}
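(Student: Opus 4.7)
Exactly as in Lemma~\ref{Discrepancy} and Theorem~\ref{thm:main1}, reduce to the case $(\lambda,g_2)=(1,\mathrm{id})$ and work over a fixed compact $\mathcal{K}\subset\SL_n(\RR)$, writing $(F,M)=(F_0^{g_1},g_1M_0)$ with $g_1\in\mathcal{K}$. Set $\beta:=n-r-d$. The strategy is the uniform analogue of Lemma~\ref{Discrepancy}: combine the Rogers-type second-moment inequality \eqref{eqn:secmom} with the volume asymptotic \eqref{eq:simple} via a union bound over a finite family of test boxes, and then sandwich an arbitrary admissible $I$ between two such boxes.

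The plan is to work along the polynomial sequence $t_k:=k^\alpha$ for $\alpha$ large, and pick
\[
\delta\;\in\;\biggl(\,\frac{(r+2)\beta-\kappa+(r+1)\eta}{(r+3)(\beta-\kappa)},\;1\,\biggr),
\]
which is non-empty precisely because the hypothesis $(r+2)\kappa+(r+1)\eta<\beta$ forces the left endpoint to be strictly less than $1$. Set $\rho:=(1-\delta)(\beta-\kappa)$. At scale $t_k$ I form a family $\mathcal{F}_k$ of test boxes $I'=\prod_{j=0}^{r}[a_j t_k^{-\kappa-\rho},\,a_j t_k^{-\kappa-\rho}+\ell_j]$, where $a_j\in\ZZ$ runs so that the centers cover $[-N(t_k),N(t_k)]^{r+1}$ on a \emph{uniform} grid of spacing $t_k^{-\kappa-\rho}$, and $(\ell_0,\ldots,\ell_r)$ ranges over an $O((\log t_k)^{r})$-sized discretization of the admissible side-length tuples $(t_k^{-\kappa_0},\ldots,t_k^{-\kappa_r})$ with $\sum_j\kappa_j=\kappa$. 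A direct count gives
\[
\#\mathcal{F}_k\;\ll_{\mathcal{K}}\;N(t_k)^{r+1}\,t_k^{(r+1)(\kappa+\rho)}(\log t_k)^{r}\;\ll\;t_k^{(r+1)\eta+(r+1)(\kappa+\rho)}(\log t_k)^{r}.
\]

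Now apply \eqref{eqn:secmom} with $T=V_{I'}^\delta$ (using $V_{I'}=\vol(A_{g,I',t_k})\asymp t_k^{\beta-\kappa}$ from \eqref{eq:simple}) to each $I'\in\mathcal{F}_k$, and take a union bound:
\[
\mu\bigl\{g\in\mathcal{K}:\exists\,I'\in\mathcal{F}_k,\;D(\ZZ^n g,A_{g,I',t_k})\ge V_{I'}^\delta\bigr\}\;\ll\;\#\mathcal{F}_k\cdot t_k^{(1-2\delta)(\beta-\kappa)}.
\]
The exponent of $t_k$ on the right simplifies to
\[
(r+1)\eta+(r+1)(\kappa+\rho)+(1-2\delta)(\beta-\kappa)\;=\;(r+1)\eta-\kappa+(r+2)\beta-(r+3)\delta(\beta-\kappa),
\]
which is strictly negative by the choice of $\delta$. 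Choosing $\alpha$ large makes these measures summable in $k$, and Borel--Cantelli then picks out a conull subset of $\mathcal{K}$ on which no scale $t_k$ is bad beyond some $k_0$. On this set, any admissible $I$ at scale $t\in[t_k,t_{k+1}]$ can be sandwiched $I^-\subset I\subset I^+$ by $I^\pm\in\mathcal{F}_k$ with $\vol(I^+\setminus I^-)\ll t_k^{-\kappa-\rho}$, so the shell's preimage $(F_0,M_0)^{-1}(I^+\setminus I^-)\cap B_{t_k}(0)$ has volume $\ll t_k^{\beta-\kappa-\rho}=t_k^{\delta(\beta-\kappa)}$, matching the allowed error. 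Exhausting $\SL_n(\RR)$ by compacts yields \eqref{eqn:thm4.1:1}.

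The delicate step is the choice of a \emph{uniform} grid of spacing $t_k^{-\kappa-\rho}$ in all $r+1$ center coordinates, rather than the seemingly natural anisotropic spacing $t_k^{-\kappa_j-\rho}$ in the $j$-th coordinate: the latter yields only $\#\mathcal{F}_k\asymp t_k^{(r+1)\eta+\kappa+(r+1)\rho}$ and forces the strictly weaker hypothesis $2\kappa+(r+1)\eta<\beta$. Using the uniform grid one picks up an extra factor $t_k^{r\kappa}$ (from $(r+1)\kappa$ instead of $\kappa$ in the exponent), and balancing this loss against the Rogers saving $V^{1-2\delta}$ is precisely what produces the sharp threshold $(r+2)\kappa+(r+1)\eta<\beta$. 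The remaining bookkeeping --- discretizing the simplex $\{\sum\kappa_j=\kappa\}$ on a logarithmic grid so that sandwiches for nearby side-length tuples are compatible, and absorbing the dilation from $N(t_{k+1})/N(t_k)=1+O(1/k)$ --- is routine.
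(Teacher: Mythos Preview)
Your overall plan---discretize targets at scales $t_k=k^\alpha$, apply Rogers' bound \eqref{eqn:secmom} to each test box, union-bound over the family $\mathcal F_k$, and conclude by Borel--Cantelli---is exactly the paper's strategy, and your exponent computation leading to the threshold $(r+2)\kappa+(r+1)\eta<\beta$ is correct. Two points, however, are off.

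First, your final paragraph has the logic inverted. With the anisotropic grid you correctly find $\#\mathcal F_k\asymp t_k^{(r+1)\eta+\kappa+(r+1)\rho}$, and the resulting constraint $2\kappa+(r+1)\eta<\beta$ is a \emph{weaker} hypothesis in the sense that it is \emph{easier} to satisfy---so the anisotropic grid would prove a strictly \emph{stronger} theorem, not a weaker one. No ``sharp balancing'' forces the uniform grid; the paper also uses a uniform grid (spacing $t_{k+1}^{-\max_j\kappa_j}$) and then simply bounds $\max_j\kappa_j\le\kappa$, and that crude estimate is precisely where the $(r+2)\kappa$ threshold comes from.

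Second, and this is a genuine gap, your $O((\log t_k)^r)$-sized discretization of the side-length simplex is too coarse for the sandwich you claim. A logarithmic mesh on $\{\sum_j\kappa_j=\kappa\}$ means adjacent admissible side-length tuples differ by a constant multiplicative \emph{factor}, so the sandwich boxes satisfy $|I^+_j|-|I^-_j|\asymp|I_j|$ and hence $|I^+\setminus I^-|\asymp|I|\asymp t_k^{-\kappa}$, not $t_k^{-\kappa-\rho}$; the shell preimage then has volume $\asymp t_k^{\beta-\kappa}$, which is not $o\bigl(t_k^{\delta(\beta-\kappa)}\bigr)$ for any $\delta<1$. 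The paper sidesteps this entirely by treating the $\kappa_j$ as \emph{fixed} parameters---note the opening line ``Let $\beta=\max\{\kappa_j\}_{j=0}^r$'' of its proof, and that Theorem~\ref{thm:main2} only ever applies the present result with all $\kappa'_j$ equal to a single value $a$. With $\kappa_j$ fixed there is no simplex to discretize: the test-box family is indexed solely by the grid position $(i_0,\ldots,i_r)$ together with $h\in\mathcal I_k$, and your plan (minus the extra $\rho$ in the spacing, which is then unnecessary) goes through cleanly.
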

	\begin{proof}
		Let $\mathcal{K}\subset \SL_n(\RR)$ be compact and $\{t_k=k^{\alpha}\}_{k\in \NN}$ be a sequence with $$\alpha>\max\left\{\frac{2d}{1-\eta}, \frac{d_n+3}{n-r-d-(r+1)\eta-(r+2)\kappa}\right\},$$ where $d_n=\frac{(n+2)(n-1)}{2}$ and let $\delta_0=1-\frac{1}{\alpha(n-r-d-\kappa)}.$ For $t>0$ and $\delta\in (\delta_0,1)$, define
		
			\[
		\mathcal{B}_{t}:=\left\{g\in \mathcal{K}:\;
		 \begin{split}&\exists~I=I_0\times I_1\times \cdots \times I_{r}\subset [-N(t),N(t)]^{r+1}\text{ with } |I_j|=t^{-\kappa_j} \\
		& \text{such that }D(\ZZ^{n}g,A_{g,I,t})\geq \vol(A_{g,I,t})^{\delta} \end{split}\right\},
		\]
		where \textcolor{black}{$D(\ZZ^{n}g,A_{g,I,t})$ is as in \eqref{def:disc}} and in this time, we let $A_{g,I,t}=(F_{0},M_0)^{-1}(I{(g_2^\lambda})^{-1})\cap B_t(0) g$ so that the set of $g\in \mathcal K$ for which \eqref{eqn:thm4.1:1} does not hold is contained in $\overline{\lim}_{t\rightarrow \infty} \mathcal B_t$.
		
		Let $\beta=\max\{\kappa_j\}_{j=0}^r$. 
		For any $t_k \le t < t_{k+1}$, take a $t_{k+1}^{-\beta}$-dense partition of the interval $[-N(t_{k+1}),N(t_{k+1})]$:
				$$-N(t_{k+1})=\xi_{k,0}<\xi_{k,1}<\ldots <\xi_{k,M(k)}=N(t_{k+1}).$$

For any \textcolor{black}{subset} $I=I_0\times I_1\times \cdots \times I_{r}\subset [-N(t),N(t)]^{r+1}\text{ with } |I_j|=t^{-\kappa_j}$ for $0\leq j\leq r$, its center point $\xi=(\xi_0,\ldots,\xi_{r})$ lies in $[-N(t_{k+1}),N(t_{k+1})]^{r+1}$.  
Therefore, there exists $(i_0,\ldots,i_{r})$ with $0\le i_j < M(k)$ such that $\xi_{k,i_{j}}\le \xi_j< \xi_{k,i_{j}+1}$ for $0\le j\le r$. 

Since 
$I_j=(\xi_j-\frac{t^{-\kappa_j}}{2},\xi_j+\frac{t^{-\kappa_j}}{2})$ and 
$\xi_{k,i_j+1}-\xi_{k,i_j}<t_{k+1}^{-\beta}\le t_{k+1}^{-\kappa_j}$, it follows that
		\[
		\left(\xi_{k,i_j+1}-\frac{t_{k+1}^{-\kappa_j}}{2},\xi_{k,i_j}+\frac{t_{k+1}^{-\kappa_j}}{2}\right)\subseteq I_j \subseteq \left(\xi_{k,i_j}-\frac{t_{k}^{-\kappa_j}}{2},\xi_{k,i_j+1}+\frac{t_{k}^{-\kappa_j}}{2}\right).
		\]
		
		Let $I_j^{1}=	\left(\xi_{k,i_j+1}-\frac{t_{k+1}^{-\kappa_j}}{2},\xi_{k,i_j}+\frac{t_{k+1}^{-\kappa_j}}{2}\right)$ and $I_j^{2}=\left(\xi_{k,i_j}-\frac{t_{k}^{-\kappa_j}}{2},\xi_{k,i_j+1}+\frac{t_{k}^{-\kappa_j}}{2}\right)$,
		and denote $I^{1}_{(\vi)}=\prod_{j=0}^{r}I_j^{1}$ and $I^{2}_{(\vi)}=\prod_{j=0}^{r}I_j^{2}$ so that $I^{1}_{(\vi)}\subseteq I \subseteq I^{2}_{(\vi)}$.
		
		Similar to the proof of Lemma \ref{Discrepancy}, we have
			\[
		\bigcup\limits_{t_{k}\leq t <t_{k+1}}\mathcal{B}_{t}\subseteq \bigcup\limits_{h\in \mathcal{I}_{k}}\bigcup\limits_{i_0=0}^{M(k)-1}\ldots \bigcup\limits_{i_{r}=0}^{M(k)-1}\mathcal{C}_{k,\vi,h},
		\]
		where $\mathcal{C}_{k,\vi,h}=\mathcal{M}_{\underline{A}_{k,\vi,h},T_{k,\vi,h}}\cup \mathcal{M}_{\overline{A}_{k,\vi,h},T_{k,\vi,h}}$
		with 
		$$\underline{A}_{k,\vi,h}=(F_0,M_0)^{-1}\left(I^{1}_{(\vi)}(g_2^\lambda)^{-1} \right)\cap B_{(1-\epsilon_k)t_k}h,$$
		$$\overline{A}_{k,\vi,h}=(F_0,M_0)^{-1}\left(I^{2}_{(\vi)}(g_2^\lambda)^{-1}\right)\cap B_{(1+\epsilon_k)t_{k+1}}h,$$
		$$T_{k,\vi,h}=\vol(\underline{A}_{k,\vi,h})^{\delta}-\vol(\overline{A}_{k,\vi,h}\setminus \underline{A}_{k,\vi,h}),$$
		where $\mathcal{I}_k\subseteq \mathcal{K}$ and $\mathcal M_{A,T}$ are as in the proof of Lemma \ref{Discrepancy}.
	For every $0\le i_0,\ldots, i_{r}< M(k)$,	
\textcolor{black}{ $I^{1}_{(\vi)}(g_2^{\lambda})^{-1}$ and $I^{2}_{(\vi)}(g_2^{\lambda})^{-1}$} are contained in 
	$$\left[(-N(t_{k+1})-1)\|(g_2^{\lambda})^{-1}\|,(N(t_{k+1})+1)\|(g_2^{\lambda})^{-1}\|\right]^{r+1}.$$ 
	
	Since $\eta <1$, for sufficiently large $k$, we have that $$(1+\epsilon_{k})t_{k+1}>T_0(N(t_{k+1})+1)\|(g_2^{\lambda})^{-1}\|$$ and $$(1-\epsilon_{k})t_{k}>T_0(N(t_{k+1})+1)\|(g_2^{\lambda})^{-1}\|.$$ 
	
	Hence by \eqref{eq:simple}, it follows that
	\[
	\begin{split}
	\vol(\overline{A}_{k,\vi,h})&=c_{h}|\det(g_2^{\lambda})|^{-1}\prod_{j=0}^{r}\left[(\xi_{k,i_j+1}-\xi_{k,i_j})
	+t_k^{-\kappa_j}\right]
	(1+\epsilon_{k})^{n-r-d}\:t_{k+1}^{n-r-d}\\
	&+O_{\mathcal{K}}\left(\prod_{j=0}^{r}\left[(\xi_{k,i_j+1}-\xi_{k,i_j})
	+t_k^{-\kappa_j}\right]t_{k+1}^{n-r-d-1/2d}N(t_{k+1})^{1/2d}\log(t_{k+1})\right).
	\end{split}
	\]
	
	Since $(1+\epsilon_{k})^{n-r-d}=1+O\left(\frac {1}{k}\right)$, $t_{k+1}^{n-r-d}=k^{\alpha(n-r-d)}\left(1+O_{\kappa,\eta}\left(\frac {1}{k}\right)\right)$ and $N(t)=O(t^{\eta})$,
   \[\begin{split}
   \vol(\overline{A}_{k,\vi,h})&=c_{h}|\det(g_2^{\lambda})|^{-1}k^{\alpha(n-r-d-\kappa)}\\
   &+O_{\mathcal{K},\kappa,\eta}\left(k^{\alpha(n-r-d-\kappa)-1}+k^{\alpha (n-r-d-\kappa-\frac{1}{2d}+\frac{\eta}{2d})} \log k\right).
   \end{split}\]
	
	Since $\alpha>\frac{2d}{1-\eta}$, we get for sufficiently large $k$,
	\[
	\vol(\overline{A}_{k,\vi,h})=c_{h}|\det(g_2^{\lambda})|^{-1}k^{\alpha(n-r-d-\kappa)}+O_{\mathcal{K},\kappa,\eta}\left(k^{\alpha(n-r-d-\kappa)-1} \right).
	\]
	
	By \eqref{eqn:secmom},
	$
	\mu(\mathcal{C}_{k,\vi,h})\ll_{\mathcal{K},\kappa,\eta}k^{(1-2\delta)\alpha (n-r-d-\kappa)}.
	$
	Therefore,
	\[
	\mu( \bigcup\limits_{t_{k}\leq t <t_{k+1}}\mathcal{B}_{t})\leq \sum\limits_{h\in \mathcal{I}_{k}}\sum\limits_{i_0=0}^{M(k)-1}\ldots \sum\limits_{i_{r}=0}^{M(k)-1}\mu(\mathcal{C}_{k,\vi,h})\ll_{\mathcal{K},\kappa,\eta}\frac{k^{d_n}M(k)^{r+1}}{k^{\alpha(n-r-d-\kappa)(2\delta-1)}}.
	\]
	
    Since $M(k)\asymp N(t_{k+1})t_{k+1}^{\beta}$\textcolor{black}{$\le N(t_{k+1})t_{k+1}^{\kappa}$}, we get
    \[
    	\mu( \bigcup\limits_{t_{k}\leq t <t_{k+1}}\mathcal{B}_{t})\ll_{\mathcal{K},\kappa,\eta}\frac{1}{k^{\alpha(n-r-d-\kappa)(2\delta-1)-d_n-\alpha(r+1)(\eta+\kappa)}}.
    \]
    
	Since $\alpha>\frac{d_n+3}{n-r-d-(r+1)\eta-(r+2)\kappa}$ \textcolor{black}{and $\delta >1-\frac{1}{\alpha (n-r-d-\kappa)}$}, we have that $	\alpha(n-r-d-\kappa)(2\delta-1)-d_n-\alpha(r+1)(\eta+\kappa)>1$,
	which implies that $\mu(\overline{\lim_{t\rightarrow \infty}}\mathcal{B}_{t})=0$, thus proving the theorem.
	\end{proof}

We now present Theorem \ref{thm:main2} which leads Theorem \ref{thm:main2_new} directly.

\begin{theorem}\label{thm:main2}
Let $0\leq \eta < \min\{1,\frac{n-r-d}{(r+1)(1+r(r+2))}\}$ and $0\le\kappa< \frac{n-r-d-(r+1)\eta-r(r+1)(r+2)\eta}{(r+1)(r+2)}$. For $0\le j\le r$, let $\kappa_j \ge 0$ be such that $\sum_{j=0}^{r}\kappa_j=\kappa$. Let $N(t)$ be a non-decreasing function such that $N(t)=O(t^{\eta})$. Then there exists $\nu>0 $ such that for almost every $(F,M)\in \mathcal{Y}_{g_2^\lambda}$ and for \textcolor{black}{all $I\subset [-N(t),N(t)]^{r+1}$} of the form $I=I_0\times I_1\times \cdots \times I_{r}$ with $|I_j|\ge t^{-\kappa_j}$ 
\[
\#\{v\in \ZZ^{n} : (F,M)(v)\in I, \|v\|\le t\}=c_{F,M} |I |t^{n-r-d}+O_{F,M}\left(|I|t^{n-r-d-\nu}\right).
\]
\end{theorem}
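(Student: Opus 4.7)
Following the same reduction used at the start of Section \ref{Section 3}, it is enough to prove the estimate for almost every $(F,M) \in \mathcal{Y}_{g_2^\lambda}$ for each fixed $(\lambda,g_2)\in (\RR-\{0\})\times\GL_r(\RR)$. My plan is to reduce to Theorem \ref{uniform} by a sandwich–grid argument: cut the given box $I$ (with $|I_j| \ge t^{-\kappa_j}$) into many smaller sub-boxes of sizes $t^{-\kappa_j'}$ with $\kappa_j' > \kappa_j$, apply the uniform estimate of Theorem \ref{uniform} cellwise, and sum.

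Concretely, I would first pick a small $\nu > 0$ and set $\kappa_j' := \kappa_j + \nu$ and $\kappa' := \sum_{j} \kappa_j' = \kappa + (r+1)\nu$. The strict hypothesis on $\kappa$ and $\eta$ in the statement is exactly what allows one to choose $\nu$ so that $(\kappa_0',\ldots,\kappa_r',\kappa')$ still lies in the admissibility range of Theorem \ref{uniform}. Theorem \ref{uniform} then yields, for almost every $(F,M) \in \mathcal{Y}_{g_2^\lambda}$ and all $t \geq t_{F,M}$, some $\delta \in (\delta_0, 1)$ with $\delta_0 = 1 - \bigl(\alpha(n-r-d-\kappa')\bigr)^{-1}$ such that
\[
\bigl|\mathcal{N}_{F,M}(J,t) - c_{F,M}\, t^{n-r-d-\kappa'}\bigr| < t^{\delta(n-r-d-\kappa')}
\]
uniformly over every sub-box $J = \prod_{j} J_j \subset [-N(t), N(t)]^{r+1}$ with $|J_j| = t^{-\kappa_j'}$.

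Next, given $I = \prod_{j} I_j$ with $|I_j| \ge t^{-\kappa_j}$, I would impose on $[-N(t), N(t)]^{r+1}$ the product grid with spacing $t^{-\kappa_j'}$ in coordinate $j$, and let $I^-$ (resp.\ $I^+$) be the union of closed grid cells fully contained in (resp.\ meeting) $I$. Then $I^- \subseteq I \subseteq I^+$, each is a disjoint union of $\asymp |I|\, t^{\kappa'}$ cells, and a standard boundary-layer count yields
\[
|I^+ \setminus I^-| \ll |I|\sum_{j=0}^{r} \frac{t^{-\kappa_j'}}{|I_j|} \le (r+1)|I|\,t^{-\nu},
\]
using $|I_j| \ge t^{-\kappa_j}$ and $\kappa_j' = \kappa_j + \nu$. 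Summing the cellwise estimate over the cells in $I^\pm$ and sandwiching $\mathcal{N}_{F,M}(I^-,t) \le \mathcal{N}_{F,M}(I,t) \le \mathcal{N}_{F,M}(I^+,t)$ gives
\[
\mathcal{N}_{F,M}(I,t) = c_{F,M}|I|t^{n-r-d} + O\bigl(|I|\,t^{n-r-d-\nu}\bigr) + O\bigl(|I|\,t^{\kappa' + \delta(n-r-d-\kappa')}\bigr),
\]
where the first error absorbs the main-term fluctuation $c_{F,M}(|I^\pm|-|I|)t^{n-r-d}$ via the boundary bound and the second collects the summed discrepancies over $\asymp |I|t^{\kappa'}$ cells. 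Taking $\alpha$ large in the invocation of Theorem \ref{uniform} pushes $\delta_0$ close to $1$ and lets one arrange $(1-\delta)(n-r-d-\kappa') \ge \nu$, folding the second error into $O(|I|t^{n-r-d-\nu})$.

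The main obstacle is the balancing act between the interlocking constraints on $\nu$, $\kappa'$, $\alpha$ and $\delta$: enlarging $\nu$ makes the boundary error cheaper but simultaneously shrinks the admissible range for $\kappa'$ in Theorem \ref{uniform} and forces $\delta$ closer to $1$, which inflates the per-cell contribution. One must simultaneously enforce $\kappa + (r+1)\nu < \bigl(n-r-d-(r+1)\eta\bigr)/(r+2)$, a lower bound on $\alpha$ of the form $\alpha > (d_n+3)/\bigl(n-r-d-(r+1)\eta-(r+2)\kappa'\bigr)$, and $\nu \lesssim 1/\alpha$, and check that all three are compatible with the stated strict hypotheses on $\kappa$ and $\eta$; verifying this compatibility with uniform implied constants is the only delicate point in the argument.
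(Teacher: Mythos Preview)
Your strategy—refine the given box into a grid of sub-boxes to which Theorem \ref{uniform} applies, sandwich, and sum—is exactly the paper's approach, and your boundary estimate $|I^+\setminus I^-|\ll (r{+}1)|I|\,t^{-\nu}$ is in fact slightly sharper than the one the paper records. Two points of divergence are worth noting. First, the paper takes \emph{equal} refined exponents $\kappa'_j=a$ for a single $a\in\bigl(\kappa+r\eta,\ \tfrac{n-r-d-(r+1)\eta}{(r+1)(r+2)}\bigr)$ rather than your $\kappa'_j=\kappa_j+\nu$; this is essentially cosmetic. Second, and more to the point, the paper avoids your ``balancing act'' by reversing the order of choices: it fixes $a$, applies Theorem \ref{uniform} to obtain whatever $\delta<1$ that theorem produces, and only \emph{then} defines $\nu=\tfrac12\min\{(1-\delta)(n-r-d-\kappa'),\ a-(\kappa+r\eta)\}$. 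This decoupling of the cell-size parameter from the final error exponent removes the circularity you flag in your last paragraph.

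One sentence in your write-up is backwards: ``taking $\alpha$ large \dots pushes $\delta_0$ close to $1$ and lets one arrange $(1-\delta)(n-r-d-\kappa')\ge\nu$.'' Large $\alpha$ forces $\delta_0$ (hence $\delta>\delta_0$) toward $1$, which makes $(1-\delta)(n-r-d-\kappa')$ \emph{small}, not large. If you insist on fixing $\nu$ first, the correct move is to take $\alpha$ near its lower bound and $\delta$ just above $\delta_0$, so that $(1-\delta)(n-r-d-\kappa')$ is close to $1/\alpha$, a fixed positive constant; then choose $\nu$ below it. Either ordering closes the argument, but the paper's ``$\delta$ first, $\nu$ last'' version makes the compatibility you worry about immediate.
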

\begin{proof}
	Let $\kappa\in \Big[0,\frac{n-r-d-(r+1)\eta-r(r+1)(r+2)\eta}{(r+1)(r+2)}\Big)$. Then $\kappa+r\eta<\frac{n-r-d-(r+1)\eta}{(r+1)(r+2)}$. Take $a\in \left(\kappa+r\eta, \frac{n-r-d-(r+1)\eta}{(r+1)(r+2)}\right)$ and let $\kappa'_j=a \text{ for }0\le j\le r$. Then 
\[\kappa < \kappa':=\sum_{j=0}^{r}\kappa'_j=(r+1)a<\frac{n-r-d-(r+1)\eta}{r+2}.\]

	 By Theorem \ref{uniform}, there is $\delta\in(0,1)$ such that for almost every $(F,M)\in \mathcal{Y}_{g_2^{\lambda}}$, there exists $t'_{F,M}>0$ such that for $t>t'_{F,M}$ and for \textcolor{black}{all $I'\subset [-N(t),N(t)]^{r+1}$} of the form $I'=I'_0\times I'_1\times \cdots \times I'_{r}$ with $|I'_j|= t^{-\kappa'_j}$, 
	 \[
	 \left|\mathcal{N}_{F,M}(I',t)-c_{F,M} |I'| t^{n-r-d}\right|<|I'|^{\delta}t^{\delta(n-r-d)}.
	 \]
	 
	Let $\nu=\frac{1}{2}\min\{(1-\delta)(n-r-d-\kappa'),a-(\kappa+r\eta)\}$.
	We first consider the case that there are $n_j\in \NN$, $j=0,\ldots, r$ such that $I=I_0\times \cdots\times I_{r}\subset [-N(t),N(t)]^{r+1}$ for which $|I_j|=n_j t^{-\kappa'_j}$, $0\leq j\leq r$. 
	
	Divide $I_j$ into $n_j$ subintervals each of length $t^{-\kappa'_j}$, 
	i.e. $I_j=\bigsqcup_{l_j=1}^{n_j}I_j^{l_j}$, where $I_j^{l_j}\subseteq I_j$ with $|I_j^{l_j}|=t^{-\kappa'_j}$. 
	This gives a partition of $I$ as $I=\bigsqcup_{l_0=1}^{n_0}\ldots \bigsqcup_{l_{r}=1}^{n_{r}}I_0^{l_0}\times \cdots \times I_{r}^{l_{r}}$ with $|I_0^{l_0}\times\cdots\times I_{r}^{l_{r}}|=t^{-\kappa'}$. 
	Then for $t>t'_{F,M}$,
	 \[
	 \begin{split}
	 &\left|\mathcal{N}_{F,M}(I,t)-c_{F,M}|I| t^{n-r-d}\right|\\
	 	&=\left|\sum_{l_0=1}^{n_0}\ldots \sum_{l_{r}=1}^{n_{r}} \left(\mathcal{N}_{F,M}(I_0^{l_0}\times\cdots\times I_{r}^{l_{r}},t)-c_{F,M}|I_0^{l_0}\times\cdots\times I_{r}^{l_{r}}| t^{n-r-d}\right) \right|\\
	 &\le\sum_{l_0=1}^{n_0}\ldots \sum_{l_{r}=1}^{n_{r}} \left| \mathcal{N}_{F,M}(I_0^{l_0}\times\cdots\times I_{r}^{l_{r}},t)-c_{F,M}|I_0^{l_0}\times\cdots\times I_{r}^{l_{r}}| t^{n-r-d} \right|\\
	 &<\sum_{l_0=1}^{n_0}\ldots \sum_{l_{r}=1}^{n_{r}} t^{-\delta \kappa'}t^{\delta(n-r-d)}= n_0\cdots n_{r}t^{-\delta \kappa'}t^{\delta(n-r-d)}\\
	 &=|I|t^{\kappa'}t^{-\delta \kappa'}t^{\delta(n-r-d)}=|I |t^{n-r-d-(1-\delta)(n-r-d-\kappa')}\\
	 &\le |I|t^{n-r-d-2\nu},
	 \end{split}
	 \]
	 which shows the theorem.
	 
	 \vspace{0.1in}
	 Now, consider the general $I\subset [-N(t),N(t)]^{r+1}$, 
	 which is of the form $I=I_0\times I_1\times \cdots \times I_{r}$ with $|I_j|\ge t^{-\kappa_j}$ for $0\leq j\leq r$. 
	 Then $|I_j|> t^{-\kappa'_j}$ and hence there exists $m_j \in \NN$ such that $m_j t^{-\kappa'_j}<|I_j| \le (m_j +1)t^{-\kappa'_j}$. 
	 Let $I_j^{1}$ and $I_j^{2}$ be such that $|I_j^{1}|=m_j t^{-\kappa'_j}$, $|I_j^{2}|=(m_j+1) t^{-\kappa'_j}$ and $I_j^{1}\subset I_j \subset I_j^{2}$. 
	 Take $\underline{I}=\prod_{j=0}^{r}I_j^{1}$ and $\overline{I}=\prod_{j=0}^{r}I_j^{2}$. 
	 Then $\underline{I}\subset I \subset \overline{I}$. 
	 Since $N(t)=O(t^{\eta})$, we get $m_j=O(t^{\eta+\kappa'_j})$ and hence $|\overline{I}|-|\underline{I}|\le c t^{r\eta-a}$ for some constant $c$. 
	 Let \textcolor{black}{$t_{F,M}=\max\{1,t'_{F,M},(1+c+c_{F,M}c)^{1/\nu}\}$}. By applying the previous result to $\underline{I}$ and $\overline{I}$ \textcolor{black}{and using the estimate $\kappa+r\eta-a<0$}, we obtain that for $t>t_{F,M}$,
	 \[
	 \begin{split}
	 &\left|\mathcal{N}_{F,M}(I,t)-c_{F,M}|I| t^{n-r-d}\right|\\
	 &\le \max\left\{ \left|\mathcal{N}_{F,M}(\underline{I},t)-c_{F,M}|I| t^{n-r-d}\right|,\left|\mathcal{N}_{F,M}(\overline{I},t)-c_{F,M}|I| t^{n-r-d}\right| \right\} \\
	 &\le \max\left\{\left|\mathcal{N}_{F,M}(\underline{I},t)-c_{F,M}|\underline{I}| t^{n-r-d}\right|,\left|\mathcal{N}_{F,M}(\overline{I},t)-c_{F,M}|\overline{I}| t^{n-r-d}\right|\right\} + c_{F,M}(|\overline{I}|-|\underline{I}|)t^{n-r-d}\\
	 &\le |\overline{I}|t^{n-r-d-2\nu}+c_{F,M}(|\overline{I}|-|\underline{I}|)t^{n-r-d}\\
	 &\le |I| t^{n-r-d-2\nu}+ c|I| t^{\kappa+r\eta-a}\:t^{n-r-d-2\nu}+ c_{F,M}c \:|I|t^{\kappa+r\eta-a}\:t^{n-r-d}\\
	 &\le |I| t^{n-r-d-\nu}.
	 \end{split}
	 \]
\end{proof}

\begin{corollary}\label{cor4.3}
	Let \textcolor{black}{$d<n-r$} and $0\leq \eta < \min\{1,\frac{n-r-d}{(r+1)(1+r(r+2))}\}$. Let $N(t)$ be a non-decreasing function such that $N(t)=O(t^{\eta})$ and $\delta(t)$ be a non-increasing function satisfying $\frac{t^{\eta(r+1)(1+r(r+2))-a}}{\delta(t)^{(r+1)^{2}(r+2)}}\rightarrow 0$ for some $a<n-r-d$. Then for almost every $(F,M)\in\mathcal{Y}_{g_{2}^{\lambda}}$ and for sufficiently large $t$,
	\[
	\sup_{\|\xi\|\le N(t)} \min_{v\in \ZZ^{n},\|v\|\le t}\|(F,M)(v)-\xi\|<\delta(t),
	\]
	where $\|\cdot\|$ denotes the supremum norm on $\RR^{n}$.
\end{corollary}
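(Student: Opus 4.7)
The plan is to reduce the supremum-minimum inequality to a uniform non-vanishing statement for the counting function, and then invoke Theorem \ref{thm:main2}. For each $\xi$ with $\|\xi\|_\infty\le N(t)$, set $I_\xi:=\prod_{j=0}^{r}(\xi_j-\delta(t),\xi_j+\delta(t))$. Then $\min_{v\in\ZZ^n,\|v\|\le t}\|(F,M)(v)-\xi\|_\infty<\delta(t)$ is equivalent to $\mathcal{N}_{F,M}(I_\xi,t)\ge 1$, so it is enough to produce admissible exponents $\kappa_0,\ldots,\kappa_r$ in the sense of Theorem \ref{thm:main2} for which $|I_{\xi,j}|=2\delta(t)\ge t^{-\kappa_j}$ uniformly in $\xi\in[-N(t),N(t)]^{r+1}$ and all large $t$.

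The exponents are forced by the hypotheses. Since the ratio $\frac{t^{\eta(r+1)(1+r(r+2))-a}}{\delta(t)^{(r+1)^{2}(r+2)}}$ only decreases as $a$ grows, we may assume $a\in(\eta(r+1)(1+r(r+2)),\,n-r-d)$ --- an interval which is non-empty by the $\eta$-constraint. I then set
\[
\kappa_j:=\frac{a-\eta(r+1)(1+r(r+2))}{(r+1)^{2}(r+2)}>0,\qquad j=0,\ldots,r,
\]
so that $\kappa:=\sum_{j=0}^{r}\kappa_j=\frac{a-\eta(r+1)(1+r(r+2))}{(r+1)(r+2)}$ is strictly less than the upper bound $\frac{n-r-d-\eta(r+1)(1+r(r+2))}{(r+1)(r+2)}$ required by Theorem \ref{thm:main2}. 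Raising $2\delta(t)\ge t^{-\kappa_j}$ to the $(r+1)^{2}(r+2)$ power reveals that this inequality is equivalent to $\frac{t^{\eta(r+1)(1+r(r+2))-a}}{\delta(t)^{(r+1)^{2}(r+2)}}\le 2^{(r+1)^{2}(r+2)}$, which holds for all $t$ large by hypothesis.

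Applying Theorem \ref{thm:main2} then yields, for almost every $(F,M)\in\mathcal Y_{g_2^\lambda}$, some $\nu>0$ such that
\[
\mathcal{N}_{F,M}(I_\xi,t)=c_{F,M}(2\delta(t))^{r+1}t^{n-r-d}+O_{F,M}\bigl((2\delta(t))^{r+1}t^{n-r-d-\nu}\bigr),
\]
uniformly over $\xi\in[-N(t),N(t)]^{r+1}$. The assumption $a<n-r-d$ forces the hypothetical lower bound $\delta(t)\gg t^{[\eta(r+1)(1+r(r+2))-a]/[(r+1)^{2}(r+2)]}$ to dominate $t^{-(n-r-d)/(r+1)}$, so the main term $\delta(t)^{r+1}t^{n-r-d}\to\infty$, and it swamps the error by a factor $t^\nu$, giving $\mathcal{N}_{F,M}(I_\xi,t)\ge 1$ uniformly in $\xi$. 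A minor technicality is that $I_\xi$ may spill slightly outside $[-N(t),N(t)]^{r+1}$ near the boundary; replacing $N(t)$ by $N(t)+\delta(t)=O(t^\eta)$ handles this.

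The proof is essentially exponent bookkeeping: the main obstacle is pinning down an $a$ so that $\kappa_j$ lies in the admissible range of Theorem \ref{thm:main2} while simultaneously the box-length inequality $2\delta(t)\ge t^{-\kappa_j}$ holds. The exact formulation of the $\eta$- and $\delta$-assumptions is precisely what makes such an $a$ available.
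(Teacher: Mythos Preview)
Your proof is correct and is precisely the intended derivation. The paper does not include an explicit proof of Corollary~\ref{cor4.3}; it is stated as an immediate consequence of Theorem~\ref{thm:main2}, and your argument---choosing $\kappa_j=\frac{a-\eta(r+1)(1+r(r+2))}{(r+1)^{2}(r+2)}$ so that the box $I_\xi$ satisfies the size constraint of Theorem~\ref{thm:main2}, then reading off $\mathcal{N}_{F,M}(I_\xi,t)\ge 1$ from the asymptotic---is exactly the bookkeeping the authors leave to the reader. Your treatment of the two minor points (enlarging $a$ into the interval $(\eta(r+1)(1+r(r+2)),\,n-r-d)$ by monotonicity, and absorbing the $\delta(t)$ overflow into $N(t)$) is also correct; for the latter you may as well replace $N(t)$ by $N(t)+\delta(t_0)$ for a fixed $t_0$ to keep the modified function non-decreasing.
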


\end{document}